\newtheorem{thm}[equation]{Theorem}
\newtheorem{lem}[equation]{Lemma}
\newtheorem{cor}[equation]{Corollary}
\newtheorem{prop}[equation]{Proposition}
\newtheorem*{thm*}{Theorem}
\newtheorem*{prop*}{Proposition}
\newtheorem*{cor*}{Corollary}
\newtheorem*{lem*}{Lemma}
\newtheorem*{MT*}{Main Theorem}
\theoremstyle{definition} %
\newtheorem*{defn*}{Definition}
\theoremstyle{remark} %
\newtheorem*{rmk*}{Remark}
\newtheorem*{rmks*}{Remarks}
\newtheoremstyle{exercise}% name
  {3pt}%      Space above
  {3pt}%      Space below
  {\small}%         Body font
  {}%         Indent amount (empty = no indent, \parindent = para indent)
  {\sc\small}% Thm head font
  {.}%        Punctuation after thm head
  {.5em}%     Space after thm head: " " = normal interword space;
   {}     %       \newline = linebreak
  {}%         Thm head spec (can be left empty, meaning `normal')
\theoremstyle{exercise}
\renewcommand{\theequation}{#1}}%
\renewcommand{\theequation}{\arabic{equation}}\addtocounter{equation}{-1}\global\@ignoretrue}
\renewcommand{\theequation}{#1}\begin{eqnarray}}%
\renewcommand{\theequation}{\arabic{equation}}\addtocounter{equation}{-1}\global\@ignoretrue}
\newenvironment{borel}[1]%
{\smallskip \refstepcounter{equation}\noindent{\textbf{\theequation.} }{{\textbf{#1.}}}}%
{\smallskip \global\@ignoretrue}
\smallskip \refstepcounter{equation}\noindent{\textbf{\theequation.} }{{\textbf{#1}}}}%
\smallskip \refstepcounter{equation}{\sc \theequation}{\sc (#1).}}%
\smallskip\refstepcounter{equation}\noindent{\textbf{\theequation.}}{\textsl{ #1.}}}%
\newenvironment{borel*}%
{\smallskip \refstepcounter{equation}\noindent{\textbf{\theequation.}}}%
{\global\@ignoretrue}
\newcommand{\flist}[1]{\hangindent\leftmargini\textup{(1)}\hskip\labelsep {#1}%
\begin{enumerate}%
\setcounter{enumi}{1}%
}
\theoremstyle{plain}
\newtheorem*{negthm}{Negative Theorem}
\renewcommand{\thetable}{\thesection\alph{table}}
\renewcommand{\thefigure}{\thesection}
\newcommand{\dx}{\mathrm{d}x}
\newcommand{\ddy}{\mathrm{d}^2y}
\newcommand{\dy}{\mathrm{d}y}
\renewcommand{\flist}[1]{\setcounter{enumi}{1}\hangindent\leftmargini\textup{\labelenumi}\hskip\labelsep {#1}%
\begin{enumerate}%
\setcounter{enumi}{1}%
}
\renewcommand{\labelenumi}{(\theenumi)}
\numberwithin{equation}{section}
\newcommand{\fix}{{\mathrm{fix}}}
\renewcommand{\pm}{{\mathrm{pari}}}
\newcommand{\eRoR}{(\text{eRoR})}
\renewcommand{\j}{J}
\newcommand{\n}{N}
\begin{document}

\title{Finding good bets in the lottery,\\ 
and why you shouldn't take them\footnote{Final version published in \emph{American Mathematical Monthly,} vol.~117 no.~1 (2010), pp.~3--26.}} 
\author{Aaron Abrams and Skip Garibaldi}

\date{}

%\date{\tt Version of \today.}

%\begin{abstract}
%Should you invest in the lottery?  The recent Mega Millions 
%and Powerball lottery jackpots of more than \$300 million have made us wonder:  When,
%if ever, is it a good idea to buy lottery tickets?  Some lottery drawings do indeed have a positive rate of return.  We use a little elementary calculus to give general criteria for lotteries to have positive or negative rates of return.  We then use a little economic theory to show that buying lottery tickets is 
%generally not a good investment, even when the rate of return is spectacularly good.
%\end{abstract}

\maketitle

Everybody knows that the lottery is a bad investment.
But do you know why?  How do you know?  

For most lotteries, the obvious answer is 
obviously correct:  lottery operators are 
running a business, and we can assume they have set up the game so that they 
make money.  If they make money, they must be paying out less than they are 
taking in; so on average, the ticket buyer loses money.  This reasoning applies,
for example, to the policy games formerly run by organized crime described in
\cite{numbers} and \cite{X}, and to the (essentially identical) Cash 3 and Cash 4 
games currently offered in the state of Georgia, where the authors reside.  This reasoning also 
applies to Las Vegas-style gambling.  (How do you think the Luxor can afford to 
keep its spotlight lit?)

However, the question becomes less trivial for games with \emph{rolling jackpots}, like Mega 
Millions (currently played in 12 of the 50 U.S.\  states), Powerball (played in 30 states), 
and various U.S.\  state lotteries.  In these games, if no player wins the largest prize 
(the \emph{jackpot}) in any particular drawing,
then that money is ``rolled over'' and increased for the next drawing.  On
average the operators of the game still make money, but for a particular drawing, 
one can imagine that a sufficiently large jackpot would give a lottery ticket a positive
average return (even though the probability of winning the jackpot with a single ticket 
remains extremely small).  Indeed, for any particular drawing, it is easy enough to 
calculate the expected rate of return using the formula in \eqref{actual.E}.  This has been done in the literature for lots of drawings (see, e.g.,~\cite{Matheson}),
and, sure enough, sometimes the expected rate of return is positive.  In this situation,
why is the lottery a bad investment?

Seeking an answer to this question, we began by studying historical lottery data.
In doing so, we were surprised both by which lotteries offered the good bets, and also by just how
good they can be.  We almost thought we should invest in the lottery!  So we 
were faced with several questions; for example, are there any rules of thumb to help pick out the 
drawings with good rates of return?  One jackpot winner said she only bought lottery tickets when the announced jackpot 
was at least \$100 million \cite{AP:minbet}.  Is this a good idea?  (Or perhaps a
modified version, replacing the threshold with something less arbitrary?)
Sometimes the announced jackpots of these games are truly enormous, 
such as on March 9, 2007, when Mega Millions announced a \$390 million prize.  
Would it have been a good idea to buy a ticket for that drawing?  And our real question,
in general, is the following:   on the occasions that the lottery offers a positive rate of return, 
is a lottery ticket ever a good investment?  And how can we tell? 
%In general, under what conditions is it a good idea to buy lottery tickets?
In this paper we document our findings.  Using elementary mathematics and economics,
we can give satisfactory answers to these questions.

We should come clean here and admit that to this point we have been deliberately
conflating several notions.  By a ``good bet" (for instance in the title of this paper)
we mean any wager with a positive rate of
return.  This is a mathematical quantity which is easily computed.  A ``good investment" is 
harder to define, and must take into account risk.  This is where things really get 
interesting, because as any undergraduate economics major knows, mathematics
alone does not provide the tools to determine when a good bet is a good investment
(although a bad bet is always a bad investment!).
To address this issue we therefore leave the domain of mathematics and enter a 
discussion of some basic economic theory, which, in Part \ref{risk} of the paper, succeeds
in answering 
our questions (hence the second part of the title).  And by the way, a ``good idea" is 
even less formal:  independently of your financial goals and strategies, you might 
enjoy playing the lottery for a variety of reasons.  We're not going to try to stop you.

To get started, we build a mathematical model of a lottery drawing.  Part \ref{setup} 
of this paper (\S\S\ref{MMPB}--\ref{model}) describes the model in detail:  it has three 
parameters ($f, F, t$) that depend only on the lottery and not on a particular drawing, and two 
parameters ($\n,\j$) that vary from drawing to drawing.  Here $\n$ is the total ticket sales and 
$\j$ is the size of the jackpot.  (The reader interested in a particular lottery can easily 
determine $f, F,$ and $t$.)  The benefit of the general model, of course, is that it allows us to 
prove theorems about general lotteries.  The parameters are free enough that the results
apply to Mega Millions, Powerball, and many other smaller lotteries.

In Part \ref{rateofreturn} (\S\S\ref{E.sec}--\ref{MMPB.sec}) we use elementary calculus 
to derive criteria for determining, without too much effort, whether a given
drawing is a good bet.  We show, roughly speaking, that drawings with ``small'' ticket
sales (relative to the jackpot; the measurement we use is $\n/\j$, which should be less
than $1/5$) offer positive rates
of return, once the jackpot exceeds a certain easily-computed threshold.  Lotto Texas
is an example of such a lottery.  On the other hand, drawings with ``large" ticket sales 
(again, this means $\n / \j$ is larger than a certain cutoff, which is slightly larger than 1)
will always have negative rates of return.  As it happens, Mega Millions and Powerball 
fall into this category; in particular, no drawing of either of these two lotteries has ever 
been a good bet, including the aforementioned \$390 million jackpot.  Moreover, based
on these considerations we argue in Section \ref{MMPB.sec} that Mega Millions and 
Powerball drawings are likely to always be bad bets in the future also.

With this information in hand, we focus on those drawings that have positive 
expected rates of return, i.e.,~the good bets, and we ask, from an economic point of view, 
whether they 
can ever present a good investment.  If you buy a ticket, of course, you will most likely 
lose your dollar; on 
the other hand, there is a small chance that you will win big.  Indeed, this is the nature
of investing (and gambling):  every interesting investment offers the potential of gain 
alongside the risk of loss.  
If you view the lottery as a game, like playing roulette, then you are probably playing for 
fun and you are both willing and expecting to lose your dollar.  But what if you really want 
to make money?  Can you do it with the lottery?  More generally, how do you compare 
investments whose expected rates of return and risks differ?

%We searched the literature and were unable to find an analysis of the 
%lottery as an investment; many authors have simply stated that the lottery is too 
%risky\footnote{That said, we are certain that professional investors are aware of our
%conclusions.}. 
In Part \ref{risk} of the paper (\S\S\ref{port}--\ref{lottery.port}) we discuss basic portfolio theory, 
a branch of economics that gives a concrete, quantitative answer to exactly this question.
Portfolio theory is part of a standard undergraduate economics curriculum, but it is not so 
well known to mathematicians.   
Applying portfolio theory to the lottery, we find, as one might expect, that even
when the returns are favorable, the risk of a lottery ticket is so large that an optimal
investment portfolio will allocate a negligible fraction of its assets to lottery tickets.  
Our conclusion, then, is unsurprising; to quote the movie \emph{War Games}, 
``the only winning move is not to play."\footnote{How about a nice game of chess?}

You might respond: ``So what?  I already knew that buying a lottery ticket was a bad 
investment."  And maybe you did.  But we thought we knew it too, until we discovered
the fantastic expected rates of return offered by certain lottery drawings!  The point
we want to make here is that if you want to actually \emph{prove} that the lottery is a bad 
investment, the existence of good bets shows that mathematics is not enough.
It takes economics in cooperation with mathematics to ultimately validate our intuition. 

\subsection*{Further reading} The lotteries described here are all modern variations on a lottery invented in Genoa in the 1600s, allegedly to select new senators \cite{Bellhouse1}.  The Genoese-style lottery became very popular in Europe, leading to interest by mathematicians including Euler; see, e.g., \cite{Euler} or \cite{Bellhouse2}.  The papers \cite{MG:US} and \cite{Ziemba} survey modern U.S.\  lotteries from an economist's perspective.  The book \cite{DrZ} gives a treatment for a general audience.   The conclusion of Part III of the present paper---that even with a very good expected rate of return, lotteries are still too risky to make good investments---has of course been observed before by economists; see \cite{MZB}.  Whereas we compare the lottery to other investments via portfolio theory, the paper \cite{MZB} analyzes a lottery ticket as an investment in isolation, using the Kelly criterion (described, e.g., in \cite{RotandoThorp} or \cite{Thorp}) to decide whether or not to invest.  
They also conclude that one shouldn't invest in the lottery, but for a different reason than
ours:  they argue that investing in the lottery using a strategy based on the Kelly criterion,
even under favorable conditions, is likely to take millions of years to produce positive returns.  
The mathematics required for their analysis is more sophisticated than the undergraduate-level 
material used in ours.

%%%%%%%%%%%%%%%%%%%%%%%%%%%%%%%%%%%%%%%%%%%%
\part{The setup:  Modeling a lottery}
\label{setup}
%%%%%%%%%%%%%%%%%%%%%%%%%%%%%%%%%%%%%%%%%%%%%%%%%%%%%
\section{Mega Millions and Powerball} \label{MMPB}
\renewcommand{\thetable}{\thesection}

The Mega Millions and Powerball lotteries are similar in that in both, a player purchasing a \$1 ticket selects 5 distinct ``main" numbers (from 1 to 56 in Mega Millions and 1 to 55 in Powerball) and 1 ``extra" number (from 1 to 46 in Mega Millions and 1 to 42 in Powerball).  This extra number is not related to the main numbers, so, e.g., the sequence
\[
\text{main} = 4, 8, 15, 16, 23  \quad \text{and} \quad \text{extra} = 15
\]
denotes a valid ticket in either lottery.  The number of possible distinct tickets is $\binom{56}{5} 46$ for Mega Millions and $\binom{55}{5} 42$ for Powerball.

At a predetermined time, the ``winning" numbers are drawn on live television and the player wins a prize (or not) based on how many numbers on his or her ticket match the winning numbers.  The prize payouts are listed in Table \ref{eg.pay}.  A ticket wins only the best prize for which it qualifies; e.g., a ticket that matches all six numbers only wins the jackpot and not any of the other prizes.  We call the non-jackpot prizes \emph{fixed}, because their value is fixed.  (In this paper, we treat a slightly simplified version of the Powerball game offered from August 28, 2005 through the end of 2008.  The actual game allowed the player the option of buying a \$2 ticket that has larger fixed prizes.  Also, in the event of a record-breaking jackpot, some of the fixed prizes are also increased by a variable amount.  We ignore both of these possibilities.  The rules for Mega Millions also vary slightly from state to state,\footnote{Most notably, in California all prizes are pari-mutuel.} and we take the simplest and most popular version here.)
\begin{table}[ht]
\begin{center}
\begin{tabular}{|c|rr|rr|} \hline
&\multicolumn{2}{c|}{Mega Millions}&\multicolumn{2}{c|}{Powerball} \\ \hline
Match&\multicolumn{1}{c}{Payout}&\parbox{0.7in}{\# of ways to make this match}&\multicolumn{1}{c}{Payout}&\parbox{0.7in}{\# of ways to make this match} \\[3ex] \hline
5/5 + extra&\multicolumn{1}{c}{jackpot}&1&\multicolumn{1}{c}{jackpot}&1 \\
5/5&\$187,500&45&\$150,000&41\\
4/5 + extra&\$7,500&255&\$7,500&250\\
4/5&\$150&11,475&\$100&10,250 \\
3/5 + extra&\$150&12,750&\$100&12,250 \\
2/5 + extra&\$10&208,250&\$7&196,000\\
3/5&\$7&573,750&\$7&502,520\\
1/5 + extra&\$3&1,249,500&\$4&1,151,500\\
0/5 + extra&\$2&2,349,060&\$3&2,118,760\\ \hline
\end{tabular}
\caption{Prizes for Mega Millions and Powerball.  A ticket costs \$1.  Payouts for the 5/5 and 4/5 + extra prizes have been reduced by 25\% to approximate taxes.} \label{eg.pay}
\end{center}
\end{table}

The payouts listed in our table for the two largest fixed prizes are slightly different from those listed on the lottery websites, in that we have deducted federal taxes.  Currently, gambling winnings over \$600  are subject to federal income tax, and winnings over \$5000 are withheld at a rate of $25\%$; see  \cite{W2G} or \cite{tax}.  Since income tax rates vary from gambler to gambler, we use $25\%$ as an estimate of the tax rate.\footnote{We guess that most people who win the lottery will pay at least 25\% in taxes.  For anyone who pays more, the estimates we give of the jackpot value $\j$ for any particular drawing should be decreased accordingly.  This kind of change strengthens our final conclusion---namely, that buying lottery tickets is a poor investment.}  For example, the winner of the largest non-jackpot prize for the Mega Millions lottery receives not the nominal \$250,000 prize, but rather $75\%$ of that amount,
as listed in Table \ref{eg.pay}.  Because state tax rates on gambling winnings vary from state to state and Mega Millions and Powerball are each played in states that do not tax state lottery winnings (e.g., New Jersey \cite[p.~19]{NJ1040} and New Hampshire respectively), we ignore state taxes for these lotteries. 

%%%%%%%%%%%%%%%%%%%%%%%%%%%%%%%%%%%%%%%%%%%%%%%%%%%%%
\section{Lotteries with other pari-mutuel prizes} 

In addition to Mega Millions or Powerball, some states offer their own lotteries with rolling jackpots.  Here we describe the Texas (``Lotto Texas") and New Jersey (``Pick 6") games.  In both, a ticket costs \$1 and consists of 6 numbers (1--49 for New Jersey and 1--54 for Texas).  For matching 3 of the 6 winning numbers, the player wins a fixed prize of \$3.

All tickets that match 4 of the 6 winning numbers split a pot of $.05\n$ (NJ) or $.033\n$ (TX), where $\n$ is the net amount of sales for that drawing.  (As tickets cost $\$1$, as a number, $\n$ is the same as the total number of tickets sold.)  The prize for matching 5 of the 6 winning number is similar; such tickets split a pot of $.055\n$ (NJ) or $.0223\n$ (TX); these prizes are typically around \$2000, so we deduct 25\% in taxes from them as in the previous section, resulting in $.0413\n$ for New Jersey and $.0167\n$ for Texas.  (Deducting this 25\% makes no difference to any of our conclusions, it only slightly changes a few numbers along the way.)
Finally, the tickets that match all 6 of the 6 winning numbers split the jackpot.

How did we find these rates?  For New Jersey, they are on the state lottery website.  Otherwise, you can approximate them from knowing---for a single past drawing---the prize won by each ticket that matched 4 or 5 of the winning numbers, the number of tickets sold that matched 4 or 5 of the winning numbers, and the total sales $\n$ for that drawing.  (In the case of Texas, these numbers can be found on the state lottery website, except for total sales, which is on the website of a third party \cite{TXsales}.)  The resulting estimates may not be precisely correct, because the lottery operators typically round the prize given to each ticket holder to the nearest dollar.

As a matter of convenience, we refer to the prize for matching 3 of 6 as \emph{fixed}, the prizes for matching 4 or 5 of 6 as \emph{pari-mutuel}, and the prize for matching 6 of 6 as the \emph{jackpot}.  (Strictly speaking, this is an abuse of language, because the jackpot is also pari-mutuel in the usual sense of the word.)

%%%%%%%%%%%%%%%%%%%%%%%%%%%%%%%%%%%%%%%%%%%%%%%%%%%%%
\section{The general model} \label{model}
\renewcommand{\thetable}{\thesection\textsc{\alph{table}}}
\setcounter{table}{0}

We want a mathematical model that includes Mega Millions, Powerball, and the New Jersey and Texas lotteries described in the preceding section.  We model an individual drawing of such a lottery, and write $\n$ for the total ticket sales, an amount of money.  Let $t$ be the number of distinct possible tickets, of which:
\begin{itemize}
\item $t^{\fix}_1, t^{\fix}_2, \ldots, t^{\fix}_c$ win fixed prizes of (positive) amounts $a_1, a_2, \ldots, a_c$ respectively,
\item $t^{\pm}_1, t^{\pm}_2, \ldots, t^{\pm}_d$ split pari-mutuel pots of (positive) size $r_1 \n, r_2 \n, \ldots, r_{d} \n$ respectively, and
\item 1 of the possible distinct tickets wins a share of the (positive) jackpot $\j$.  More precisely, if $w$ copies of this 1 ticket are sold, then each ticket-holder receives $\j/w$.
\end{itemize}

Note that the $a_i$, the $r_i$, and the various $t$'s depend on the setup of the lottery,
whereas $N$ and $J$ vary from drawing to drawing.  Also, we mention a few technical points.  The prizes $a_i$, the number $\n$, and the jackpot $\j$ are denominated in units of ``price of a ticket".  For all four of our example lotteries, the tickets cost \$1, so, for example, the amounts listed in Table \ref{eg.pay} are the $a_i$'s---one just drops the dollar sign.  Furthermore, the prizes are \emph{the actual amount the player receives}.  We assume that taxes have already been deducted from these amounts, at whatever rate such winnings would be taxed.  (In this way, we avoid having to include tax in our formulas.)  Jackpot winners typically have the option of receiving their winnings as a lump sum or as an annuity; see, e.g., \cite{dope} for an explanation of the differences.  We take $\j$ to be the after-tax value of the lump sum, or---what is the essentially the same---the present value (after tax) of the annuity.  Note that this $\j$ is far smaller than the jackpot amounts announced by lottery operators, which is usually a total of the pre-tax annuity payments.  Some comparisons are shown in Table \ref{eg.jacks}.
\begin{table}[htb]
\begin{tabular}{|r@{/}c@{/}lc|rrr|}\hline
\multicolumn{3}{|c}{Date}&Game&\parbox{.75in}{Annuity jackpot (pre-tax)}&\parbox{.75in}{Lump sum jackpot (pre-tax)}&$\j$ (estimated) \\ \hline
4&07&2007&Lotto Texas&75m&45m&33.8m \\
3&06&2007&Mega Millions&390m&233m&175m \\ 
2&18&2006&Powerball&365m&177.3m&133m \\
10&19&2005&Powerball&340m&164.4m&123.3m \\ \hline
\end{tabular}
\caption{Comparison of annuity and lump sum jackpot amounts for some lottery drawings.  The value of $J$ is the lump sum minus tax, which we assume to be 25\%.  The letter `m' denotes millions of dollars.} \label{eg.jacks}
\end{table}

We assume that the player knows $\j$.  After all, the pre-tax value of the annuitized jackpot is announced publicly in advance of the drawing, and from it one can estimate $\j$.  For Mega Millions and Powerball, the lottery websites also list the pre-tax value of the cash jackpot, so the player only needs to consider taxes.

\subsection*{Statistics}
In order to analyze this model, we focus on a few statistics $f, F$, and $\j_0$ deduced from the data above.   These numbers depend only on the lottery itself (e.g., Mega Millions), and not on a particular drawing.

We define $f$ to be the cost of a ticket less the expected winnings from fixed prizes, i.e.,
\begin{equation} \label{f.def}
f := 1 - \sum_{i=1}^c a_i t^\fix_i / t.
\end{equation}
This number is approximately the proportion of lottery sales that go to the jackpot, the pari-mutuel prizes, and ``overhead" (i.e., the cost of lottery operations plus vigorish; around $45\%$ of net sales for the example lotteries in this paper).  It is not quite the same, because we have deducted income taxes from the amounts $a_i$.  Because the $a_i$ are positive, we have $f \le 1$.

We define $F$ to be 
\begin{equation} \label{F.def}
F := f - \sum_{i=1}^d r_i,
\end{equation}
which is approximately the proportion of lottery sales that go to the jackpot and overhead.  Any actual lottery will put some money into one of these, so we have $0 < F \le f \le 1$.

Finally, we put
\begin{equation} \label{j0.def}
\j_0 := Ft.
\end{equation}
We call this quantity the \emph{jackpot cutoff}, for reasons which will become apparent in
Section \ref{j0.sec}.  Table \ref{eg.naive} lists these numbers for our four example lotteries.  We assumed that some ticket can win the jackpot, so $t \ge 1$ and consequently $J_0 > 0$.
\begin{table}[htb]
\begin{center}
\begin{tabular}{|c|rccc|} \hline
Game&\multicolumn{1}{c}{$t$}&$f$&$F$&$\j_0$ \\ \hline
Mega Millions&175,711,536&0.838&0.838&147m \\
Powerball&146,107,962&0.821&0.821&120m \\
Lotto Texas&25,827,165&0.957&0.910&23.5m \\
New Jersey Pick 6&13,983,816&0.947&0.855&11.9m \\ \hline
\end{tabular}
\caption{Some statistics for our example lotteries that hold for all drawings.} \label{eg.naive}
\end{center}
\end{table}

%%%%%%%%%%%%%%%%%%%%%%%%%%%%%%%%%%%%%%%%%%%%%%%%%%%%%
%\section{Global assumptions} \label{global}

%In order to effectively analyze our mathematical model, from now on we make some assumptions on the parameters $t, f$, and $F$.  Recall that these are parameters of a lottery and do not vary from drawing to drawing.  A gambler hoping to apply our results need only verify these assumptions once for a given lottery.

%\subsection*{Assumption on $t$}
%First, we assume that there are at least 500 tickets, i.e.,
%\begin{equation} \label{t.hyp}
%t \ge 500.
%\end{equation}
%This is an extremely weak hypothesis, as all our example lotteries have well over 1 million tickets.

%\subsection*{Assumptions on $f$ and $F$}
%For the lotteries below, we also assume that 
%\begin{equation} \label{fF.hyp}
%f \le .97 \quad \text{and} \quad 0.81 \le F \le 0.92.
%\end{equation}
%These bounds hold for all of our example lotteries by Table \ref{eg.naive}.  
%Together with \eqref{t.hyp}, these assumptions imply a lower bound on $\j_0$:
%\begin{equation} \label{J_0.hyp}
%\j_0 = Ft \ge 0.81 \times 500 = 405.
%\end{equation}

%%%%%%%%%%%%%%%%%%%%%%%%%%%%%%%%%%%%%%%%%%%%
\part{To bet or not to bet: Analyzing the rate of return}
\label{rateofreturn}
%%%%%%%%%%%%%%%%%%%%%%%%%%%%%%%%%%%%%%%%%%%%%%%%%%%%%
\section{Expected rate of return}  \label{E.sec}
\renewcommand{\thetable}{\thesection}

Using the model described in Part \ref{setup}, we now calculate the expected rate of return (eRoR) on a lottery ticket, assuming that a total of $\n$ tickets are sold. The eRoR is
\begin{align}
\eRoR &= - \left( \parbox{.425in}{cost of ticket} \right) + \left( \parbox{1.2in}{expected winnings from fixed prizes} \right) \notag  \\
&\quad + \left( \parbox{1.2in}{expected winnings from pari-mutuel\\ prizes} \right)  + \left( \parbox{1.2in}{expected winnings from the jackpot} \right), \label{naive.E}
\end{align}
where all the terms on the right are measured in units of ``cost of one ticket".
The parameter $f$ defined in \eqref{f.def} is the negative of the first two terms.
  
We will assume that the particular numbers on the other tickets are chosen randomly.  
See \ref{unpopular} below for more on this hypothesis.  With this assumption, the probability 
that your ticket is a jackpot winner together with $w - 1$ of the other tickets is given by the binomial distribution:
\begin{equation}\label{binomial}
\binom{\n - 1}{w-1}\,(1/t)^w\,(1 - 1/t)^{\n - w}.
\end{equation}
In this case the amount you win is $\j / w$.  We therefore define
\[
s(p, N) := \sum_{w \ge 1} \frac1w \binom{\n - 1}{w-1}\,p^w\,(1 - p)^{\n - w},
\]
and now the expected amount won from the jackpot is $\j\,s(1/t, \n)$.  Combining this with a similar computation for the pari-mutuel prizes and with the preceding paragraph, we obtain the formula:
\begin{equation} \label{actual.E0}
\eRoR = -f + \sum_{i=1}^d r_i\, \n \,s(p_i, \n) + \j \, s(1/t, \n),
\end{equation}
where $p_i := t^{\pm}_i / t$ is the probability of winning the $i$th pari-mutuel prize and 
($d$ is the number of pari-mutuel prizes).

The domain of the function $s$ can be extended to allow $\n$ to be real (and not just integral);
one can do this the same way one ordinarily treats binomial coefficients in calculus, or
equivalently by using the closed form expression for $s$ given in Proposition \ref{s1}.

In the rest of this section we use first-year calculus to prove some basic facts about the 
function $s(p,x)$.  An editor pointed out to us that our proofs could be shortened if we
approximated the binomial distribution \eqref{binomial} by a Poisson distribution (with
parameter $\lambda=N/t$).  Indeed this approximation is extremely good for the relevant
values of $N$ and $t$; 
however, since we are able to obtain the results we need from
the exact formula \eqref{binomial}, we will stick with this expression in order
to keep the exposition self-contained.
%
%however we will stick with \eqref{binomial} in order to keep the 
%exposition self-contained.

\begin{prop} \label{s1}
For $0<p<1$ and $\n >0$,
\[
s(p, \n) = \frac{1 - (1-p)^\n}{\n}.
\]
\end{prop}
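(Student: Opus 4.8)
The plan is to collapse the sum into an ordinary binomial expansion by absorbing the nuisance factor $1/w$.  The workhorse is the elementary identity
\[
\frac1w\binom{\n-1}{w-1} \;=\; \frac1\n\binom{\n}{w}\qquad(w\ge 1),
\]
which holds because each side equals the same product $(\n-1)(\n-2)\cdots(\n-w+1)/w!$ of $w-1$ descending factors over $w!$; for non-integer $\n$ one simply reads the binomial coefficients as these products.  Substituting this into the definition of $s$ turns it into
\[
s(p,\n) \;=\; \frac1\n\sum_{w\ge 1}\binom{\n}{w}p^w(1-p)^{\n-w} \;=\; \frac1\n\left(\sum_{w\ge 0}\binom{\n}{w}p^w(1-p)^{\n-w}\;-\;(1-p)^\n\right).
\]
The remaining sum is $\bigl(p+(1-p)\bigr)^\n = 1$ by the binomial theorem when $\n$ is a positive integer, and for real $\n$ it equals $(1-p)^\n\sum_{w\ge 0}\binom{\n}{w}\bigl(p/(1-p)\bigr)^w = (1-p)^\n(1-p)^{-\n} = 1$ by the generalized binomial series.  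Either way $s(p,\n) = \bigl(1-(1-p)^\n\bigr)/\n$, which is the claim.

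A second route sidesteps the combinatorial identity and makes the passage to real $\n$ completely transparent:  write $1/w = \int_0^1 x^{w-1}\,\dx$, interchange the summation and the integral, reindex by $v = w-1$, pull out a factor of $p$, and recognize the resulting power series in $x$ as $\bigl((1-p)+px\bigr)^{\n-1}$.  This leaves the single integral $p\int_0^1\bigl((1-p)+px\bigr)^{\n-1}\,\dx$, which the substitution $u = (1-p)+px$ converts into $\int_{1-p}^{1}u^{\n-1}\,\mathrm{d}u = \bigl(1-(1-p)^\n\bigr)/\n$.

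The only genuine point of care is justifying the manipulations of the infinite series---splitting off the $w=0$ term in the first argument, or exchanging $\sum$ and $\int$ in the second.  When $\n$ is a positive integer the sum is finite and there is nothing to check.  When $\n$ is real one invokes the absolute convergence of $\sum_{w}\binom{\n}{w}y^w$ for $|y|<1$, here with $y = p/(1-p)$ (so this route wants $p<1/2$, which comfortably covers the lottery application, where $p = 1/t$ is minuscule), together with dominated convergence for the interchange.  I expect this bookkeeping to be the only obstacle, and an entirely routine one; the substantive content of the argument is the single identity $\frac1w\binom{\n-1}{w-1} = \frac1\n\binom{\n}{w}$, or equivalently the integral trick $1/w = \int_0^1 x^{w-1}\,\dx$.
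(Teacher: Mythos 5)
Your proof is correct, and it is really two proofs. The second route is essentially the paper's own argument in different clothing: the paper integrates both sides of the binomial theorem $(x+y)^n=\sum_k\binom{n}{k}x^ky^{n-k}$ with respect to $x$, fixes the constant of integration, and then plugs in $x=p$, $y=1-p$, $\n=n+1$; this is exactly the integral $p\int_0^1\bigl((1-p)+px\bigr)^{\n-1}\,\dx$ that you reach by writing $1/w=\int_0^1x^{w-1}\,\dx$ and exchanging sum and integral, the two computations differing only by the substitution $u=(1-p)+px$. Your first route, via the absorption identity $\frac1w\binom{\n-1}{w-1}=\frac1\n\binom{\n}{w}$, is genuinely different and arguably cleaner: it is purely algebraic, needs no interchange of limits at all when $\n$ is a positive integer (the sum is then finite), and reduces the proposition to a single application of the binomial theorem; its only cost is having to split off the $w=0$ term. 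On the extension to real $\n$: the paper sidesteps this entirely by \emph{defining} $s$ for non-integer $\n$ via the closed form (or via generalized binomial coefficients), so your restriction $p<1/2$ for convergence of the generalized binomial series is a legitimate observation but moot for the proposition as the paper uses it---and in the application $p$ is of order $1/t$ with $t$ in the millions, so either of your arguments comfortably covers everything the paper needs.
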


We will prove this in a moment, but first we plug it into \eqref{actual.E0} to obtain:
\begin{equation} \label{actual.E}
\fbox{$\eRoR = -f + \sum_{i=1}^d r_i\,(1 - (1 - p_i)^\n) +  \j \, s(1/t, \n)$}
\end{equation}
Applying this formula to our example drawings from Table \ref{eg.jacks} gives the eRoRs listed in Table \ref{eg.big}. 
\begin{table}[ht]
\begin{tabular}{|r@{/}c@{/}lc|ccc|cc|} \hline
\multicolumn{3}{|c}{Date}&Game&$\j$&$\n$&exp.~RoR&$\j/\j_0$&$\n/\j$ \\ \hline
4&07&2007&Lotto Texas&33.8m&4.2m&$+30\%$&1.44&0.13 \\
3&06&2007&Mega Millions&175m&212m&$-26\%$&1.19&1.22 \\
2&18&2006&Powerball&133m&157m&$-26\%$&1.11&1.18\\
10&19&2005&Powerball&123.3m&161m&$-31\%$&1.03&1.31 \\ \hline
\end{tabular}
\caption{Expected rate of return for some specific drawings, calculated using \eqref{actual.E}.} \label{eg.big}
\end{table}

\begin{proof}[Proof of Prop.~\ref{s1}]
The binomial theorem gives an equality of functions of two variables $x, y$:
\[
(x+y)^n = \sum_{k \ge 0} \binom{n}{k} x^k y^{n-k}.
\]
We integrate both sides with respect to $x$ and obtain
\[
\frac{(x+y)^{n+1}}{n+1} + f(y) = \sum_{k \ge 0} \binom{n}{k} \frac{x^{k+1}}{k+1} y^{n-k}
\]
for some unknown function $f(y)$.  Plugging in $x = 0$ gives:
\[
\frac{y^{n+1}}{n+1} + f(y) = 0\,;
\]
hence for all $x$ and $y$, we have:
\[
\frac{(x+y)^{n+1}}{n+1} - \frac{y^{n+1}}{n+1} = \sum_{k\ge0}\frac1{k+1} \binom{n}{k} x^{k+1} y^{n-k}\,.
\]
The proposition follows by plugging in $x = p$, $y = 1-p$, $\n = n + 1$, and $w = k + 1$.
\end{proof}

We will apply the next lemma repeatedly in what follows.

\begin{lem}\label{power}
If $0<c<1$ then $1-\frac 1c - \ln c <0$.
\end{lem}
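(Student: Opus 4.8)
The plan is to reduce the inequality to the monotonicity of a single auxiliary function. Set $g(c) := 1 - \frac1c - \ln c$ for $c \in (0,1]$. First I would record the boundary value $g(1) = 1 - 1 - \ln 1 = 0$, so that the claim $g(c) < 0$ on $(0,1)$ is equivalent to showing $g(c) < g(1)$ there, i.e.\ that $g$ is strictly increasing on this interval.

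To establish that, differentiate: $g'(c) = \frac{1}{c^2} - \frac1c = \frac{1-c}{c^2}$. For $0 < c < 1$ the numerator $1-c$ is positive and the denominator $c^2$ is positive, so $g'(c) > 0$ throughout $(0,1)$. By the first-year-calculus fact that a function with everywhere-positive derivative is strictly increasing (or the mean value theorem applied on $[c,1]$), we get $g(c) < g(1) = 0$ for all $c \in (0,1)$, which is exactly the assertion of the lemma.

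There is essentially no obstacle here; the only points to watch are keeping the signs straight and restricting to the correct interval, since $g$ is not monotone on all of $(0,\infty)$. As an alternative route one could invoke the standard inequality $\ln x < x - 1$ for $x > 0$, $x \neq 1$, applied to $x = 1/c$: this yields $-\ln c = \ln(1/c) < \frac1c - 1$, which rearranges immediately to $1 - \frac1c - \ln c < 0$. I would lean toward presenting the derivative computation explicitly, since the same function $g$ and its monotonicity seem likely to resurface when the lemma is ``applied repeatedly'' in the calculus arguments of the following sections.
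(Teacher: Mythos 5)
Your proof is correct. Your primary argument differs mildly in packaging from the paper's: you differentiate $g(c)=1-\frac1c-\ln c$ directly, get $g'(c)=\frac{1-c}{c^2}>0$ on $(0,1)$, and compare with the boundary value $g(1)=0$ via the mean value theorem; the paper instead starts from the standard inequality $1+z<e^z$ for $z>0$, deduces $\ln z<z-1$ for $z>1$, and substitutes $z=1/c$. The ``alternative route'' you sketch at the end is precisely the paper's proof, so in substance you have both arguments. The two are equally elementary; the direct monotonicity computation keeps everything in terms of the single function $g$ that governs the sign, while the paper's version isolates a reusable inequality ($\ln z<z-1$) whose proof it can wave at as a common exercise. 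Either is acceptable here, and your caution about $g$ not being monotone on all of $(0,\infty)$ is a sensible remark, though nothing in the lemma requires it.
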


\begin{proof}
It is a common calculus exercise to show that $1+z<e^z$ for all $z>0$ (e.g., by using
a power series).  This implies $\ln(1+z)<z$ or, shifting the variable, $\ln z < z-1$ for all $z>1$.
Applying this with $z=1/c$ completes the proof.
\end{proof}

\begin{lem} \label{h.d}
Fix $b \in (0, 1)$.  
\begin{enumerate}
\item 
The function 
\[
h(x,y) := \frac{1 - b^{xy}}x
\]
satisfies $h_x:=\partial h / \partial x < 0$ and $h_y:=\partial h / \partial y > 0$ for $x, y > 0$.  
\item
For every $z>0$, the level set $\{(x, y) \mid h(x,y) = z\}$ intersects the first quadrant
in the graph of a smooth, positive, increasing, concave up function defined on the interval 
$(0,1/z)$.
\end{enumerate}
\end{lem}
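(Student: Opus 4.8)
The plan is to handle part (1) by direct differentiation, invoking Lemma~\ref{power} for the single nonobvious sign, and to handle part (2) by solving the equation $h(x,y)=z$ explicitly for $y$ and then reading off each claimed property. For $h_y$, differentiating gives $h_y=-b^{xy}\ln b$ immediately, which is positive because $0<b<1$ forces $\ln b<0$. For $h_x$ I would compute, via the quotient rule,
\[
h_x \;=\; \frac{-xy\,b^{xy}\ln b \;-\;(1-b^{xy})}{x^2},
\]
and then substitute $c:=b^{xy}$, noting that $xy\ln b=\ln c$ and that $c\in(0,1)$ since $x,y>0$; the numerator collapses to $-c\ln c-1+c=c\bigl(1-\tfrac1c-\ln c\bigr)$, so $h_x$ has the sign of $1-\tfrac1c-\ln c$, which is negative by Lemma~\ref{power}.

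For part (2), I would first note that for $x>0$ the equation $h(x,y)=z$ is equivalent to $b^{xy}=1-zx$, hence—taking logarithms—to $y=\phi(x):=\dfrac{\ln(1-zx)}{x\ln b}$. Since $\ln b<0$, this value is defined and strictly positive exactly when $0<1-zx<1$, i.e.\ when $x\in(0,1/z)$, and it is then unique; so the level set meets the first quadrant precisely in the graph of $\phi$ over $(0,1/z)$, and from its closed form $\phi$ is visibly smooth and positive there. That $\phi$ is increasing follows either by differentiating the identity $h(x,\phi(x))=z$ to get $\phi'=-h_x/h_y>0$ from part (1), or by a one-line direct computation.

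The only step that needs an idea—and where I expect essentially all the work to lie—is concavity. Here I would rescale: writing $\ln b=-\beta$ with $\beta>0$ and $u:=zx\in(0,1)$, one gets $\phi(x)=\frac{z}{\beta}\,\psi(zx)$, where $\psi(u):=-\ln(1-u)/u$. Expanding $-\ln(1-u)=\sum_{k\ge1}u^k/k$ gives $\psi(u)=\sum_{n\ge0}u^{n}/(n+1)$, a power series with all coefficients positive and radius of convergence $1$; differentiating termwise, $\psi''(u)=\sum_{n\ge2}\tfrac{n(n-1)}{n+1}u^{n-2}>0$ on $(0,1)$, so $\phi''(x)=\frac{z^3}{\beta}\,\psi''(zx)>0$ and $\phi$ is concave up. Everything except this last power-series observation is routine calculus together with Lemma~\ref{power}.
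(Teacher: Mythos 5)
Your proof is correct. Part (1) and the reduction of part (2) to the explicit formula $y=\ln(1-zx)/(x\ln b)$ on $(0,1/z)$ coincide with the paper's argument (your numerator $c(1-\tfrac1c-\ln c)$ is exactly the paper's bracketed factor $1-b^{-xy}-\ln(b^{xy})$, handled by the same Lemma~\ref{power}), and your implicit-differentiation shortcut $\phi'=-h_x/h_y>0$ is a slightly slicker route to monotonicity than the paper's direct computation, which invokes Lemma~\ref{power} a second time. Where you genuinely diverge is concavity: the paper differentiates the explicit formula twice, arrives at the expression $1-4c+3c^2-2c^2\ln c$ with $c=1-xz$, and proves it positive by showing it is decreasing in $c$ (a third appeal to Lemma~\ref{power}) and vanishes at $c=1$. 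You instead rescale so that the level curve becomes $\tfrac{z}{\beta}\,\psi(zx)$ with $\psi(u)=-\ln(1-u)/u=\sum_{n\ge0}u^n/(n+1)$, and read off $\psi''>0$ from the positivity of the series coefficients. Your route avoids the messy second-derivative bookkeeping entirely and actually proves more (every derivative of $\psi$ is positive on $(0,1)$, so the curve is ``completely monotone'' in a strong sense); the paper's route stays within the elementary single-variable calculus toolkit it has set up, at the cost of a longer computation and one more use of its key inequality. Both are valid; yours is the more conceptual argument.
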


\begin{proof}
We first prove (1).  The partial with respect to $y$ is easy: $h_y = -b^{xy} \ln (b) > 0$.
For the partial with respect to $x$, we have:
\[
\frac{\partial}{\partial x} \left[ \frac{1 - b^{xy}}{x} \right] = \frac{b^{xy}}{x^2} \left[ 1-b^{-xy}-\ln(b^{xy}) \right].
\]
Now, $b^{xy}$ and $x^2$ are both positive, and the term in brackets is negative
by Lemma \ref{power} (with $c=b^{xy}$).  Thus $h_x <0$.

To prove (2), we fix $z>0$ and solve the equation $h(x,y)=z$ for $y$, obtaining
\[
y=\frac{\ln(1-xz)}{x\ln b}.
\]
Recall that $0<b<1$, so $\ln b<0$.  Thus when $x>0$ the level curve is the graph 
of a positive smooth function (of $x$) with domain $(0,1/z)$, as claimed.  

Differentiating $y$ with respect to $x$ and rearranging yields
\[
\frac{dy}{dx}=\frac 1{x^2\ln b} \left[ 1-\frac 1 {1-xz} - \ln(1-xz)\right].
\]
For $x$ in the interval $(0,1/z)$, we have $0<1-xz<1$, so Lemma \ref{power} implies
that the expression in brackets is negative.  Thus $\dy/\dx$ is positive;
i.e., $y$ is an increasing function of $x$.

To establish the concavity we differentiate again and rearrange (considerably), 
obtaining
\[
\frac{\ddy}{\dx^2}=\frac{-1}{x^3\ln b}\frac 1{(1-xz)^2}\left[3(1-xz)^2-4(1-xz) + 1-2(1-xz)^2 \ln(1-xz) \right].
\]
As $\ln b<0$ and $x>0$, we aim to show that the expression
\begin{equation}\label{u.expr}
1 - 4c + 3c^2 - 2c^2 \ln(c)
\end{equation}
is positive, where $c :=1-xz$ is between 0 and 1.

To show that \eqref{u.expr} is positive, note first that it is decreasing in $c$.  
(This is because the derivative is $4c(1-\frac 1c - \ln c)$, which is negative by
Lemma~\ref{power}.)  Now, at $c=1$, the value of \eqref{u.expr} is 
0, so for $0<c<1$ we deduce that \eqref{u.expr} is positive.  
Hence $\frac{\ddy}{\dx^2}$ is positive, and the level curve is concave up, as claimed.
\end{proof}

\begin{cor} \label{decr} 
Suppose that $0 < p < 1$.  For $x \ge 0$,
\begin{enumerate}
\item the function $x \mapsto s(p,x)$ decreases from $-\ln(1-p)$ to $0$.
\item the function $x \mapsto x\,s(p, x)$ increases from $0$ to $1$.
\end{enumerate}
\end{cor}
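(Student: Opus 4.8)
The plan is to reduce everything to the closed form $s(p,x)=\bigl(1-(1-p)^x\bigr)/x$ supplied by Proposition~\ref{s1} and then read off monotonicity from Lemma~\ref{h.d}. Set $b:=1-p$, so $0<b<1$, and observe that $s(p,x)=h(x,1)$ in the notation of Lemma~\ref{h.d}, where $h(x,y)=(1-b^{xy})/x$. (At $x=0$ the formula is a $0/0$ expression, and we interpret $s(p,0)$ as the limiting value, which the argument below computes.)

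For part (1), the monotonic decrease is immediate: Lemma~\ref{h.d}(1) gives $h_x<0$ for $x>0$, and $x\mapsto s(p,x)=h(x,1)$ is just the restriction of $h$ to the line $y=1$, so it is strictly decreasing on $(0,\infty)$. For the endpoint behavior, as $x\to\infty$ we have $b^x\to0$ since $0<b<1$, hence $s(p,x)=(1-b^x)/x\to0$. As $x\to0^+$, I would recognize $(1-b^x)/x$ as $-\bigl(b^x-b^0\bigr)/x$, i.e.\ the negative of the difference quotient of $t\mapsto b^t$ at $t=0$; this converges to $-\,\tfrac{d}{dt}b^t\big|_{t=0}=-\ln b=-\ln(1-p)$. (Equivalently, one L'H\^opital step does it.) Combined with the strict monotonicity, this shows $x\mapsto s(p,x)$ decreases from $-\ln(1-p)$ to $0$, as claimed.

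For part (2), the key simplification is that $x\,s(p,x)=1-(1-p)^x$ outright, with no division, so this function is actually smooth at $x=0$. Differentiating gives $\tfrac{d}{dx}\bigl(1-(1-p)^x\bigr)=-(1-p)^x\ln(1-p)$, which is positive because $(1-p)^x>0$ and $\ln(1-p)<0$; hence the function is strictly increasing. Its value at $x=0$ is $1-1=0$, and as $x\to\infty$ we again use $(1-p)^x\to0$ to get the limit $1$. This completes part (2).

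\textbf{Where the work is.} Honestly there is no serious obstacle here—both claims follow from the explicit formula plus the sign of one derivative—so the only thing to be careful about is bookkeeping at the left endpoint: $s(p,\cdot)$ is only defined by continuous extension at $x=0$, and one should state the limit $-\ln(1-p)$ explicitly rather than leaning on Lemma~\ref{h.d}, whose hypotheses are stated for $x>0$. The monotonicity half of each statement is the part worth citing precisely (Lemma~\ref{h.d}(1) for (1); a one-line derivative computation for (2)).
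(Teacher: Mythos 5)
Your proposal is correct and follows essentially the same route as the paper: part (1) comes from Lemma \ref{h.d} with $y=1$ and $b=1-p$ plus the two limits (the paper also invokes l'H\^opital at $x\to 0$), and part (2) reads everything off the closed form $x\,s(p,x)=1-(1-p)^x$ from Proposition \ref{s1}. Your explicit difference-quotient interpretation of the $x\to 0^+$ limit and the written-out derivative in (2) are just slightly more detailed versions of the paper's one-line justifications.
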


\begin{proof}
(1)  Lemma \ref{h.d} implies that $s(p, x)$ is decreasing, by setting $y = 1$ and $b = 1 - p$.
Its limit as $x\to \infty$ is obvious because $0<1-p<1$, and its limit as $x\to 0$ can
be obtained via l'Hospital's rule.

(2)  Proposition \ref{s1} gives that $x\,s(p, x)$ is equal to $1 - (1 - p)^x$, whence 
it is increasing and has the claimed limits because $0<1-p<1$.  
\end{proof}

\begin{borel}{Unpopular numbers} \label{unpopular}
Throughout this paper, we are assuming that the other lottery players select their tickets randomly.  
This is not strictly true:  in a typical U.S.~lottery drawing, only 70 to 80 percent of the tickets sold are 
``quick picks,'' i.e., tickets whose numbers are picked randomly by computer \cite{PB,MG:US}, whereas 
the others have numbers chosen by the player.  Ample evidence indicates that player-chosen 
numbers are not evenly distributed amongst the possible choices, leading to a strategy:  by playing 
``unpopular" numbers, you won't improve your chances of winning any particular prize, 
but if you do win the jackpot, your chance of sharing it decreases.  This raises your expected return.
See, e.g., \cite{Chernoff}, \cite{HR}, \cite{DrZ}, or the survey in \S4 of \cite{Haigh}.

The interested reader can adjust the results of this paper to account for this strategy.
One way to proceed is to view \eqref{actual.E0} as a lower bound on the eRoR, and then
compute an upper bound by imagining the extreme scenario in which one plays numbers
that are not hand-picked by any other player.  In this case any sharing of the jackpot will 
result from quick picks, which we assume are used by at least 70\% of all players.  Therefore
one can replace $N$ by $N'=0.7N$ in \eqref{binomial} and hence in \eqref{actual.E0}.
The resulting value of eRoR will be an upper bound on the returns available using
the approach of choosing unpopular numbers.
\end{borel}

%%%%%%%%%%%%%%%%%%%%%%%%%%%%%%%%%%%%%%%%%%%%%%%%%%%%%
\section{The jackpot cutoff $\j_0$} \label{j0.sec}

With the results of the preceding section in hand, it is easy to see that the rate of return on ``nearly all" lottery drawings is negative.  How?  We prove an upper bound on the eRoR \eqref{actual.E}.

In \eqref{actual.E}, the terms $1 - (1 - p_i)^\n$ are at most 1.  So we find:
\[
\eRoR \leq -f  + \sum_{i=1}^d r_i+ \j\,s(1/t, \n).
\]
The negative of the first two terms is the number $F$ defined in \eqref{F.def}, i.e.,
\begin{equation} \label{ub.E}
\eRoR \leq -F + \j\,s(1/t, \n).
\end{equation}
But by Corollary \ref{decr}, $s(1/t, \n)$ is less than $s(1/t, 1) = 1/t$, so
\[
\eRoR < -F + \j/t.
\]
In order for the eRoR to be positive, clearly $-F + \j/t$ must be positive, i.e., $\j$ must be greater 
than $Ft=\j_0$.  This is why (in \eqref{j0.def}) we called this number the \emph{jackpot cutoff}.  To summarize:
\begin{equation} \label{jack.cut}
\fbox{\parbox{2.9in}{If $\j < \j_0$, then a lottery ticket is a bad bet, i.e., the expected rate of return  is negative.}}
\end{equation}

With \eqref{jack.cut} in mind, we ignore all drawings with $\j < \j_0$.  This naive and easy-to-check criterion is extremely effective; it shows that ``almost all" lottery drawings have negative eRoR.
The drawings listed in Table \ref{eg.big} are all the drawings in Mega Millions and Powerball (since inception of the current games until the date of writing, December 2008) where the jackpot $\j$ was at least $\j_0$.  There are only 3 examples out of about 700 drawings, so this event is uncommon.  We also include one Lotto Texas drawing with $\j > \j_0$.  (This drawing was preceded by a streak of several such drawings in which no one won the jackpot; the eRoR increased until someone won the April 7, 2007 drawing.)  During 2007, the New Jersey lottery had no drawings with $\j > \j_0$.

Table \ref{eg.big} also includes, for each drawing, an estimate of the number of tickets sold for that drawing.  The precise numbers are not publicized by the lottery operators.  In the case of Powerball, we estimated $\n$ by using the number of tickets that won a prize, as reported in press releases.  For Mega Millions, we used the number of tickets that won either of the two smallest fixed prizes, as announced on the lottery website.  For Lotto Texas, the number of tickets sold is from \cite{TXsales}.

%%%%%%%%%%%%%%%%%%%%%%%%%%%%%%%%%%%%%%%%%%%%%%%%%%%%%
\section{Break-even curves}
\numberwithin{figure}{section}
\renewcommand{\thefigure}{\thesection\textsc{\alph{figure}}}

At this point, for any particular drawing, we are able to apply \eqref{actual.E} to compute the 
expected rate of return.  But this quickly becomes tiresome; our goal in this section is to give 
criteria that---in ``most'' cases---can be used to determine the \emph{sign} of the expected 
rate of return, i.e., whether or not the drawing is a good bet.  These criteria will be easy to
check, making them applicable to large classes of drawings all at once.

Recall that for a given lottery, the numbers $f, r_i, p_i,$ and $t$ (and therefore $F$ and $\j_0$)
are constants depending on the setup of the lottery; what changes from drawing to drawing are
the values of $\j$ and $\n$.  Thus $\j$ and $\n$ are the parameters which control the eRoR for
any particular drawing.  In light of \eqref{jack.cut}, it makes sense to normalize $\j$ by dividing
it by the jackpot cutoff $\j_0$; equation \eqref{jack.cut} says that if $\j/\j_0<1$ then the eRoR is
negative.  It is helpful to normalize $\n$ as well; it turns out that a good way to do this is to 
divide by $\j$, as the quantity $\n/\j$ plays a decisive role in our analysis.  Thus we think of 
ticket sales as being ``large'' or ``small'' only in relation to the size of the jackpot.
We therefore use the variables
\[
x := \n/\j \quad \text{and} \quad y := \j/\j_0
\]
to carry out our analysis, instead of $\n$ and $\j$.  Plugging this in to \eqref{actual.E}---i.e., substituting
$y\j_0$ for $\j$ and $xy\j_0$ for $\n$---gives
\begin{equation} \label{xy.E}
\fbox{$\eRoR = -f + \displaystyle\sum_{i=1}^d r_i (1 - (1-p_i)^{xy\j_0}) + \frac{1 - (1 - p)^{xy\j_0}}x$}.
\end{equation}

For any particular lottery, we can plug in the actual values for the parameters $f$, $r_i$, $p_i$, $p$, and $\j_0$ and plot the level set $\{ \eRoR = 0 \}$ in the first quadrant of the $(x,y)$-plane.  
We call this level set the \emph{break-even curve}, because drawings lying on this curve 
have expected rate of return zero.  This curve is interesting for the gambler because it 
separates the region of the $(x,y)$-plane consisting of drawings with positive rates of 
return from the region of those with negative rates of return.  

\begin{prop} \label{be}
For every lottery as in \S\ref{model}, the break-even curve is the graph of a smooth,
positive function $\ell(x)$ with domain $(0, 1/F)$.  If the lottery has no pari-mutuel 
prizes, then $\ell(x)$ is increasing and concave up.
\end{prop}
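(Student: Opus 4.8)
The plan is to read the break-even condition $\eRoR = 0$, with $\eRoR$ given by the boxed formula~\eqref{xy.E}, as an equation to be solved for $y$ in terms of $x$, and to extract every claimed property from an implicit-function analysis of
\[
G(x,y) := -f + \sum_{i=1}^d r_i\bigl(1 - (1-p_i)^{xy\j_0}\bigr) + \frac{1 - (1-p)^{xy\j_0}}{x},
\]
which is defined and $C^\infty$ on the half-plane $\{x>0\}$ (including on the boundary line $y=0$).

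First I would observe that, for each fixed $x>0$, the map $y\mapsto G(x,y)$ is strictly increasing: each summand $1-(1-p_i)^{xy\j_0}$ has positive $y$-derivative $-x\j_0(1-p_i)^{xy\j_0}\ln(1-p_i)>0$ since $0<1-p_i<1$, and the last term is handled the same way (or, if one prefers, by Lemma~\ref{h.d}(1)). Next I would record the boundary value $G(x,0)=-f<0$ (recall $0<F\le f$ from~\eqref{F.def}) and the limit: as $y\to\infty$ every power $(1-p_i)^{xy\j_0}$ and $(1-p)^{xy\j_0}$ tends to $0$, so $G(x,y)\to \tfrac1x - F$. Combining these with the intermediate value theorem: when $0<x<1/F$ we have $\tfrac1x-F>0$, so there is a unique $y=\ell(x)>0$ with $G(x,\ell(x))=0$; when $x\ge 1/F$ the function $G(x,\cdot)$ stays strictly below its limit $\tfrac1x-F\le 0$, hence is negative on all of $y\ge 0$ and has no zero. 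This shows the break-even curve is the graph of a positive function $\ell$ with domain exactly $(0,1/F)$, and smoothness of $\ell$ then follows from the implicit function theorem because $G$ is $C^\infty$ and $\partial G/\partial y>0$ along the curve.

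For the final assertion, assume the lottery has no pari-mutuel prizes, so $d=0$ and $F=f$. Then the break-even equation collapses to $\tfrac{1-(1-p)^{xy\j_0}}{x}=F$, and setting $b:=(1-p)^{\j_0}\in(0,1)$ turns the left-hand side into exactly the function $h(x,y)$ of Lemma~\ref{h.d}. So the break-even curve is the level set $\{h=F\}$, and Lemma~\ref{h.d}(2) applied with $z=F$ delivers at once that $\ell$ is smooth, positive, increasing, and concave up on $(0,1/F)$.

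I expect the only delicate point to be pinning down the right endpoint of the domain---showing that no break-even $y$ survives once $x\ge 1/F$---which is precisely where the strict monotonicity in $y$ together with the exact limit $\tfrac1x-F$ does the work; the rest is either a one-line computation or a direct appeal to Lemma~\ref{h.d}.
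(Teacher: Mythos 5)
Your proposal is correct and follows essentially the same route as the paper's own proof: fix $x$, show $\eRoR$ is strictly increasing in $y$ from $-f$ up to the limit $1/x - F$, apply the intermediate value theorem and monotonicity to get existence, uniqueness, and the domain $(0,1/F)$, invoke the implicit function theorem for smoothness, and reduce the no-pari-mutuel case to Lemma \ref{h.d}(2) applied to the level set of $h$ at $F=f$. The only cosmetic difference is that the paper cites the mean value theorem for uniqueness where you use strict monotonicity directly, which is equivalent.
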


\begin{proof}
For a fixed positive $x$, we can use \eqref{xy.E} to view the eRoR as a (smooth)
function of one variable $y$.  At $y = 0$, we have $\eRoR = -f < 0$.  
Looking at \eqref{xy.E}, the non-constant terms are all strictly increasing functions of $y$
(because $x,J_0 > 0$), so eRoR is an increasing function of $y$.  Further,
\[
\lim_{y \mapsto \infty} \eRoR = -f + \sum_{i=1}^d r_i + \frac1x = -F + \frac1x.
\]
Therefore, by the intermediate value theorem, for each $x \in (0, 1/F)$ there is a
$y>0$ such that $\eRoR = 0$; this value is unique by the mean value theorem.  Moreover
for $x \ge 1/F$ the eRoR is always negative.  This proves that the break-even curve
is the graph of a positive function $\ell(x)$ with domain $(0, 1/F)$.

Now, observe that for fixed $y>0$, it is in general unclear whether eRoR is increasing 
in $x$.  One expects it to decrease, because $x$ is proportional to ticket sales.
However, in \eqref{xy.E}, the terms in the sum (which are typically small) are increasing 
in $x$ whereas only the last term is decreasing in $x$ (by Lemma \ref{h.d}).
If there are no pari-mutuel prizes, then the sum disappears, and the eRoR is
decreasing in $x$.  Specifically, in this case the graph of $\ell(x)$ is the level curve 
of the function $h(x,y) = (1 - b^{xy})/x$ at the value $f$, where $b = (1-p)^{\j_0}$.  
By Lemma \ref{h.d}, $\ell(x)$ is smooth, increasing, and concave up.

Nevertheless, even when there are pari-mutuel prizes, the function $\ell(x)$ is still
smooth.  This is because, as we showed in the first paragraph of this proof, for any 
fixed $x>0$, eRoR is increasing in $y$.  This means the $y$-derivative of eRoR is 
never zero (for $0<x<1/F$), so we can invoke the implicit function theorem to 
conclude that its level sets are graphs of smooth functions of $x$ on this interval.  
The break-even curve is one such level set.
\end{proof}

The break-even curves for Mega Millions,
Powerball, Lotto Texas, and New Jersey Pick 6 are the lighter curves in Figure~\ref{be.fig}.
(Note that they are so similar that it is hard to tell that there are four of them!)

The two bold curves in Figure \ref{be.fig} are the level curves
\begin{equation} \label{lb}
U:=\left\{(x,y) \, \left|\   -1 + \frac{1 - 0.45^{xy}}{x} = 0\right. \right\}
\end{equation}
and
\begin{equation} \label{ub}
L:=\left\{(x,y) \,\left|\  -0.8 + \frac{1 - 0.36^{xy}}{x} = 0\right.\right\}.
\end{equation}
Their significance is spelled out in the following theorem, which is our core result.
We consider a lottery to be \emph{major} if there are at least 500 distinct tickets;
one can see from the proofs that this particular bound is almost arbitrary.

\begin{figure}[ht]
\[
\includegraphics[width=3in]{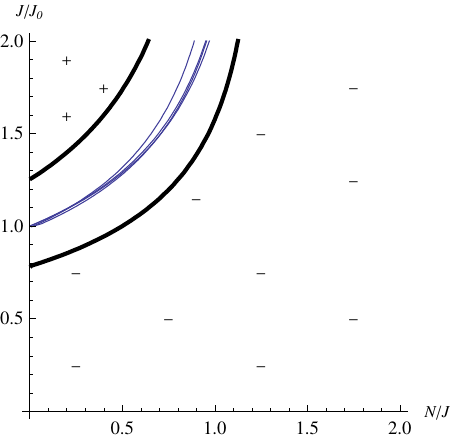}
\]
\caption{Illustration of Theorem \ref{crux}.  The break-even curve for any lottery must lie in the region between the two bold curves.  The lighter curves between the two bold curves are the break-even curves for Mega Millions, Powerball, Lotto Texas, and New Jersey Pick 6.  Drawings in the region marked $-$ have a negative eRoR and those in the region marked $+$ have a positive eRoR.}\label{be.fig}
\end{figure}

\begin{thm} \label{crux}
For any major lottery with $F \ge 0.8$ (i.e., that roughly speaking pays out on average less than $20\%$ of its revenue in prizes other than the jackpot) we have:
\begin{enumerate}
\renewcommand{\theenumi}{\alph{enumi}}
\item The break-even curve lies in the region bounded by the curves $U$ and $L$ and the $y$-axis. \label{crux.a}
\item \label{crux.b} Any drawing with $(x,y)$ above the break-even curve has positive eRoR. 
\item \label{crux.c} Any drawing with $(x,y)$ below the break-even curve has negative eRoR. 
\end{enumerate}
In particular, for any drawing of any such lottery, if $(x,y)$ is above $U$ then the eRoR is
positive, and if $(x,y)$ is below or to the right of $L$ then the eRoR is negative.
\end{thm}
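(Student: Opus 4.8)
The plan is to reduce the whole theorem to a single two-sided estimate on the jackpot term of \eqref{xy.E}, combined with the monotonicity of the eRoR in $y$ already obtained in the proof of Proposition~\ref{be}. Recall from there that for each fixed $x>0$ the eRoR in \eqref{xy.E} is a strictly increasing function of $y$ equal to $-f<0$ at $y=0$, and that for $x\ge 1/F$ it is negative for all $y>0$; for $0<x<1/F$ the break-even height $\ell(x)$ is the unique $y$ with $\eRoR=0$, with $\eRoR<0$ below the curve and $\eRoR>0$ above it. Parts \eqref{crux.b} and \eqref{crux.c} are precisely this last sentence. Given this, it will suffice to show $\eRoR>0$ at every point of $U$ and $\eRoR\le 0$ at every point of $L$: by the monotonicity in $y$, the first forces the break-even curve strictly below $U$ (and in particular forces $U$ above $L$, so the region in \eqref{crux.a} is the one pictured), the second forces it above $L$, which gives \eqref{crux.a}; and the ``in particular'' clause then follows by comparing along vertical lines, using Proposition~\ref{be} once more for points with $x\ge 1.25\ge 1/F$ that lie to the right of $L$.

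\noindent\textbf{The key estimate.} Putting $p=1/t$ and using $\j_0=Ft$, the substitution $\n=xy\j_0$ rewrites the jackpot term of \eqref{xy.E} as $\bigl(1-((1-1/t)^{Ft})^{xy}\bigr)/x$, so the goal is to sandwich the base $(1-1/t)^{Ft}=\bigl((1-1/t)^t\bigr)^F$ strictly between $0.36$ and $0.45$. The upper bound is immediate from the inequality $1+z<e^z$ used in Lemma~\ref{power}: it gives $(1-1/t)^t<e^{-1}$, hence $(1-1/t)^{Ft}<e^{-F}\le e^{-0.8}<0.45$, using $F\ge 0.8$ and that $a\mapsto a^F$ is increasing. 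For the lower bound I would use ``major,'' i.e.\ $t\ge 500$: a short logarithmic estimate gives $(1-1/t)^t\ge(1-1/500)^{500}>0.36$ (for instance $-t\ln(1-1/t)=1+\tfrac1{2t}+\tfrac1{3t^2}+\cdots<1+\tfrac1{2(t-1)}<1.01$, so $(1-1/t)^t>e^{-1.01}>0.36$), and then $F\le 1$ together with the base lying in $(0,1)$ yields $(1-1/t)^{Ft}\ge(1-1/t)^t>0.36$. Thus for any major lottery with $F\ge 0.8$ one has $0.36<(1-1/t)^{Ft}<0.45$.

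\noindent\textbf{From the estimate to the curves.} Since $xy>0$ and all three bases lie in $(0,1)$, the estimate propagates to $\dfrac{1-0.45^{xy}}{x}<\dfrac{1-(1-1/t)^{xyFt}}{x}<\dfrac{1-0.36^{xy}}{x}$ at every point of the first quadrant. I would then bound the remaining terms of \eqref{xy.E} crudely: the pari-mutuel sum $\sum r_i(1-(1-p_i)^\n)$ lies between $0$ and $\sum r_i=f-F$, while $-f\ge-1$. On $U$ the left fraction equals $1$, so $\eRoR\ge-f+\dfrac{1-(1-1/t)^{xyFt}}{x}>-1+1=0$; on $L$ the right fraction equals $0.8$, so $\eRoR\le-f+(f-F)+0.8=-F+0.8\le0$. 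Feeding these two facts back through the monotonicity in $y$---noting that the domain $(0,1)$ of $U$ sits inside the domain $(0,1/F)$ of $\ell$ because $F\le1$, and that the domain $(0,1/F)$ of $\ell$ sits inside the domain $(0,1.25)$ of $L$ because $F\ge0.8$---gives \eqref{crux.a}, and sliding vertically gives the ``in particular'' clause.

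\noindent\textbf{Main obstacle.} I do not anticipate a real obstacle; the one point needing care is the arithmetic of the key estimate, namely that $e^{-0.8}<0.45$ and that $(1-1/t)^t>0.36$ once $t\ge 500$, since this is exactly where the hypotheses $F\ge 0.8$ and ``major'' are consumed and the numerical margins are not large. Everything downstream is the routine bookkeeping of deciding, term by term in \eqref{xy.E}, which quantity to bound from above and which from below, plus the already-established fact that the eRoR increases in $y$.
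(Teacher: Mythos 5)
Your proposal is correct and takes essentially the same approach as the paper: parts (\ref{crux.b}) and (\ref{crux.c}) come from the monotonicity of the eRoR in $y$ established in the proof of Proposition~\ref{be}, and part (\ref{crux.a}) comes from sandwiching the jackpot term between the defining functions of $U$ and $L$ via the estimate $0.36<\bigl((1-1/t)^t\bigr)^F<0.45$ (using $t\ge 500$, $0.8\le F\le 1$) together with the crude bounds $-f\ge -1$ and $0\le\sum r_i(1-(1-p_i)^\n)\le f-F$. The only cosmetic differences are that you bound the base $(1-1/t)^{Ft}$ directly (with a series estimate in place of the paper's appeal to the monotonicity in Lemma~\ref{1/e} and the computation $0.998^{500}>0.36$) and evaluate the resulting two-sided bound on the curves $U$ and $L$ rather than stating it globally and citing Lemma~\ref{h.d}.
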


The hypotheses of the theorem are fairly weak.  Being major means that the lottery has at least 500 possible distinct tickets; our example lotteries all have well over a million distinct tickets.  All of our example lotteries have $F \ge 0.82$.

We need the following lemma from calculus.

\begin{lem}\label{1/e}
The function $g(t)=(1-1/t)^t$ is increasing for $t>1$, and the limit is $1/e$.
\end{lem}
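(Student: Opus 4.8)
The plan is to reduce the monotonicity claim to a statement about $\ln g$, where it becomes a one-line consequence of Lemma~\ref{power}, and to handle the limit by the usual substitution $u = 1/t$.

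First I would note that $g(t) = (1-1/t)^t > 0$ for $t > 1$, so it suffices to show that $\phi(t) := \ln g(t) = t\ln(1 - 1/t)$ is increasing; then $g = e^{\phi}$ is increasing as well. A routine differentiation (chain rule, using $\tfrac{d}{dt}\ln(1-1/t) = \tfrac{1}{t(t-1)}$) gives
\[
\phi'(t) = \ln\!\left(1 - \frac1t\right) + \frac{1}{t-1}.
\]
Then I would apply Lemma~\ref{power} with $c := 1 - 1/t$, which lies in $(0,1)$ precisely because $t > 1$. The lemma gives $1 - \tfrac1c - \ln c < 0$; since $\tfrac1c = \tfrac{t}{t-1}$ and hence $1 - \tfrac1c = -\tfrac{1}{t-1}$, this rearranges to exactly $\ln(1-1/t) + \tfrac{1}{t-1} > 0$, i.e.\ $\phi'(t) > 0$. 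Hence $\phi$, and therefore $g$, is increasing on $(1,\infty)$.

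For the limit I would substitute $u = 1/t$, so that $\lim_{t\to\infty}\phi(t) = \lim_{u\to 0^+}\tfrac{\ln(1-u)}{u} = -1$ by l'Hospital's rule (or the power series for $\ln(1-u)$), and therefore $g(t)\to e^{-1} = 1/e$.

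I do not expect a genuine obstacle here: the only step needing a little care is matching the derivative $\phi'(t)$ to the algebraic form of Lemma~\ref{power}, and even that is purely mechanical. (If one wished to avoid citing Lemma~\ref{power} at all, the substitution $z = 1/(t-1) > 0$ turns $\phi'(t) > 0$ into the textbook inequality $\ln(1+z) < z$, which is the very estimate established inside the proof of Lemma~\ref{power}.)
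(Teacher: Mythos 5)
Your proposal is correct and follows essentially the same route as the paper's proof: both take the logarithm, differentiate to obtain $\ln(1-1/t)+\frac{1}{t-1}$, and reduce the monotonicity claim to showing this expression is positive (the limit being dispatched as routine in both cases). The only difference is in the last step and is cosmetic: the paper proves the positivity by writing $\ln(1-1/t)=-\int_{t-1}^{t}\frac{1}{x}\,\mathrm{d}x$ and comparing the integrand with $\frac{1}{t-1}$, whereas you invoke Lemma~\ref{power} with $c=1-1/t$; both arguments come down to the same elementary inequality $\ln z<z-1$.
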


\begin{proof}
The limit is well known.  We prove that $g$ is increasing, which is an exercise in 
first-year calculus.\footnote{This function is one of those for which computers give a misleading plot.  For large values of $t$, say $t > 10^8$, Mathematica shows a function that appears to be oscillating.}
In order to take the derivative we first take the logarithm:
\[
\ln g(t) = t\ln(1-1/t)
\]
Now differentiating both sides yields
\begin{equation} \label{ln}
\frac {g'(t)}{g(t)} = \ln(1-1/t) + t \frac {1/t^2}{1-1/t} = \ln(1-1/t) + \frac 1 {t-1}.
\end{equation}
We want to show that $g'(t)>0$.  Since the denominator on the left, $g(t)$,
is positive, it suffices to show that the right side is positive.  To do this we 
rearrange the logarithm as $\ln(1-1/t)=\ln(\frac{t-1}{t})=\ln(t-1)-\ln t$. 
Now, using the fact that 
\[
\ln t = \int _1^t \frac 1 x \,\mathrm{d}x,
\]
the right side of \eqref{ln} becomes 
\[
-\int _{t-1}^t \frac 1 x \,\dx + \frac 1 {t-1},
\]
which is evidently positive, because the integrand $1/x$ is less than 
$1/(t-1)$ on the interval of integration.
\end{proof}

We can now prove the theorem.

\begin{proof}[Proof of Theorem \ref{crux}]
Parts \eqref{crux.b} and \eqref{crux.c} follow from the proof of Proposition \ref{be}: 
for a fixed $x$, the eRoR is an increasing function of $y$.

To prove (a), we first claim that for any $c > 0$,
\begin{equation} \label{crux.1}
1 - 0.45^c < c \j_0 \, s(1/t, c\j_0) < 1 - 0.36^c.
\end{equation}
Recall that $\j_0=Ft$.  We write out $s$ using Proposition \ref{s1}:
\[
c\j_0\, s(1/t, c \j_0) = 1 - (1- 1/t)^{cFt}.
\]
By Lemma \ref{1/e}, the function
$t\mapsto (1-1/t)^t$ is increasing.  Thus, since $t\geq 500$ by assumption, we have 
\[
(1-1/t)^t\ge 0.998^{500}>0.36.
\]
For the lower bound in \eqref{crux.1}, Lemma \ref{1/e} implies that $(1 - 1/t)^t$ is at most $1/e$.  Putting these together, we find:
\[
1 - e^{-cF} < c \j_0\, s(1/t, c \j_0) < 1 - 0.36^{cF}.
\]
Finally, recall that $F\geq0.8$;
plugging this in (and being careful with the inequalities) now establishes \eqref{crux.1}.

Looking at equation \eqref{actual.E} for the eRoR, we observe that the pari-mutuel rates $r_i$ are nonnegative, so for a lower bound we can take $p_i = 0$ for all $i$.  Combined with the upper bound \eqref{ub.E}, we have:
\[
-f + J\,s(1/t, N) \le \eRoR \le -F + J \, s(1/t, N).
\]
Replacing $N$ with $xy J_0$ and $J$ with $xyJ_0 / x$ and applying \eqref{crux.1} with $c = xy$ gives:
\begin{equation} \label{crux.2}
-1 + \frac{1 - 0.45^{xy}}{x} < \eRoR < -0.8 + \frac{1 - 0.36^{xy}}{x}.
\end{equation}

By Lemma \ref{h.d}, the partial derivatives of the upper and lower bounds in \eqref{crux.2} 
are negative with respect to $x$ and positive with respect to $y$.  This implies \eqref{crux.a} 
as well as the last sentence of the theorem.
\end{proof}

\begin{figure}[h]
\[
\includegraphics[width=3in]{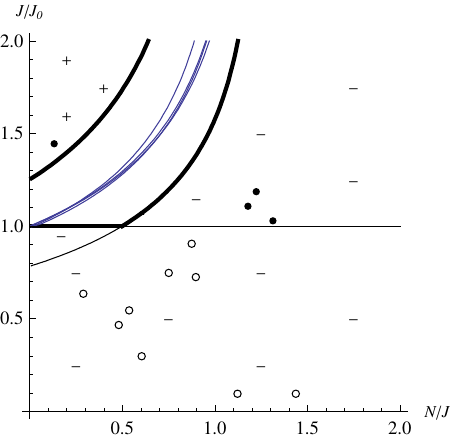}
\]
\caption{Refinement of Figure \ref{be.fig}.  Drawings in the regions marked with $+$'s have positive eRoR and those in regions marked with $-$'s have negative eRoR.}\label{summ}
\end{figure}

We can enlarge the negative region in Figure \ref{be.fig} somewhat by incorporating \eqref{jack.cut}, which says that the eRoR is negative for any drawing with $y = \j/\j_0 < 1$.  
In fact \eqref{jack.cut} implies that the break-even curve for a lottery will not intersect the line 
$y=1$ except possibly when no other tickets are sold, i.e., when $\n=1$ or equivalently 
$x=1/\j\approx 0$.  The result is Figure \ref{summ}.  

Figure \ref{summ} includes several data points for actual drawings which have occurred.  
The four solid dots represent the four drawings from Table \ref{eg.big}.  The circles in the bottom 
half of the figure are a few typical Powerball and Mega Millions drawings.  In all the drawings that we examined, the only ones we found in the inconclusive region (between the bold curves)
were some of those leading up to the positive Lotto Texas drawing plotted in the figure.

\renewcommand{\thefigure}{\thesection}

%%%%%%%%%%%%%%%%%%%%%%%%%%%%%%%%%%%%%%%%%%%%%%%%%%%%%
\section{Examples} \label{applications}

We now give two concrete illustrations of Theorem \ref{crux}.  Let's start with the good news.

\begin{borel}{Small ticket sales} \label{small.eg}
The point $(0.2,1.4)$ (approximately) is on the curve $U$ defined in \eqref{lb}.
Any drawing of any lottery satisfying 
\begin{enumerate}
\item $\n < 0.2\,\j$ and \label{small.nb}
\item $\j >  1.4 \, \j_0$ \label{small.jb}
\end{enumerate}
is above $U$ and so will have positive eRoR by Theorem \ref{crux}.
(This points satisfying (1) and (2) make up the small shaded rectangle on the left side of Figure~\ref{rectangles.fig}.)  We chose
to look at this point because many state lotteries, such as Lotto Texas, tend to satisfy
\eqref{small.nb} every week.  So to find a positive eRoR, this is the place to look:
just wait until $\j$ reaches the threshold \eqref{small.jb}.  
%(Recall that the reader betting 
%on unpopular numbers can strengthen this result slightly by replacing $\n$ with $\Neff$ as
%discussed in \ref{unpopular}.)
\end{borel}

\begin{figure}[ht]
\[
\includegraphics[height=3in]{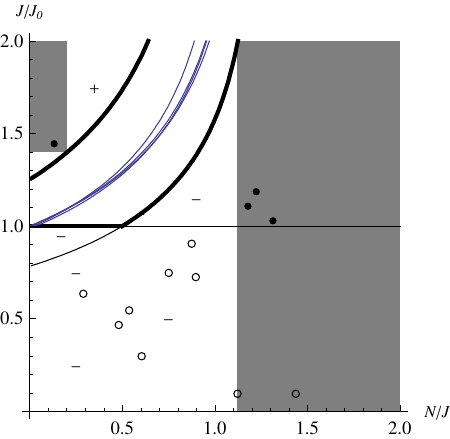}
\]
\caption{Figure \ref{summ} with the regions covered by the Small Ticket Sales and the Large 
Ticket Sales examples shaded.}\label{rectangles.fig}
\end{figure}

\begin{borel}{Large ticket sales} \label{big.eg}
On the other hand, the point $(1.12,2)$ is on the curve $L$ defined in \eqref{ub}; therefore any drawing of any lottery with 
\begin{enumerate}
\item $\n >1.12\,\j$ and \label{large.nb}
\item $\j < 2\,\j_0$ \label{large.jb}
\end{enumerate}
lies below or to the right of $L$ and will have a negative eRoR.  These drawings make up the large 
rectangular region on the right side of Figure~\ref{rectangles.fig}.
Here we have chosen to focus on this particular rectangle for two reasons.
Mega Millions and Powerball tend to have large ticket sales (relative to $\j$); specifically,
$\n/\j$ has exceeded $1.12$ every time $\j$ has exceeded $\j_0$.  Moreover, no
drawing of any lottery we are aware of has ever come close to violating \eqref{large.jb}.
In fact, for lotteries with large ticket sales (such as Mega Millions and Powerball), the largest 
value of $\j/\j_0$ we have observed is about $1.19$, in the case of the Mega Millions drawing in 
Table \ref{eg.big}.  Thus no past drawing of Mega Millions or Powerball has 
ever offered a positive eRoR.
%(Once again, betting on unpopular numbers as
%in \ref{unpopular} can improve one's eRoR; in this case replacing $\n$ with 
%$\Neff$ weakens our result slightly.) 

Of course, if we are interested in Mega Millions and Powerball specifically, then we
may obtain stronger results by using their actual break-even curves, rather than the
bound $L$.  We will do this in the next section to argue that in all likelihood these 
lotteries will never offer a good bet.
\end{borel}

In each of these examples, our choice of region (encoded in the hypotheses 
(1) and (2)) is somewhat arbitrary.  The reader who prefers a different rectangular region 
can easily cook one up:  just choose a point on $U$ or $L$ to be the (lower right or upper left)
corner of the rectangle and apply Theorem~\ref{crux}.  At the cost of slightly more complicated 
(but still linear) hypotheses, one could prove something about various triangular regions 
as well.

%%%%%%%%%%%%%%%%%%%%%%%%%%%%%%%%%%%%%%%%%%%%
\section{Mega Millions and Powerball} \label{MMPB.sec}

Mega Millions and Powerball fall under the Large Ticket Sales example (\ref{big.eg})
of the previous section, and indeed they have never offered a positive eRoR.  We now 
argue that \emph{in all likelihood, no future drawing of either of these lotteries will ever 
offer a positive expected rate of return.}

\begin{borel}{Mega Millions / Powerball}\label{MMPB.eg}
Note that for these lotteries we know the exact break-even curves, so we needn't
use the general bound $L$.  Using the data from Table~\ref{eg.naive}, we find that
the break-even curve for Mega Millions is given by
\[
Z_{MM} := \left\{-0.838 + \frac { 1- 0.43^{xy} } {x} =0 \right\}
\]
and the break-even curve for Powerball is given by 
\[
Z_{PB} := \left\{-0.821 + \frac { 1- 0.44^{xy}} x =0 \right\}.
\]

The point $(1,2)$ is below both of these curves, as can be seen by plugging 
in $x=1$ and $y=2$.  Thus, as in Theorem \ref{crux}, \emph{any Mega Millions or Powerball
drawing has negative eRoR if
\begin{enumerate}
\item $N > J$ and
\item $J < 2\,J_0$.
\end{enumerate}}
\end{borel}

\begin{borel}{Why Mega Millions and Powerball will always be bad bets}\label{big.bad}
As we have mentioned already, every time a Mega Millions or Powerball
jackpot reaches $\j_0$, the ticket sales $\n$ have easily exceeded the jackpot $\j$.
Assuming this trend will continue, the preceding example shows that a gambler seeking 
a positive rate of return on a Mega Millions
or Powerball drawing need only consider those drawings where the jackpot $\j$ is at least $2\j_0$.
(Even then, of course, the drawing is not guaranteed to offer a positive rate of return.)  We
give a heuristic calculation to show that the jackpot will probably never be so large.

Since the
inception of these two games, the value of $\j/\j_0$ has exceeded 1 only three times, in the last three drawings in Table \ref{eg.big}.
The maximum value attained so far is 1.19.
What would it take for $\j/\j_0$ to reach 2?

We will compute two things:  first, the probability that a large jackpot ($\j\geq\j_0$) rolls over,
and second, the number of times this has to happen for the jackpot to reach $2\j_0$.
The chance of a rollover is the chance that the jackpot is not won, i.e.,
\[
(1-1/t)^\n
\]
where $\n$ is the number of tickets sold.  This is a decreasing function of $\n$, so in order to 
find an upper bound for this chance, we need a lower bound on $\n$ as a function of
$\j$.  For this we use the assumption \eqref{large.nb} of Section \ref{big.eg}, which historically has
been satisfied for all large jackpots.  Since a rollover can only cause the jackpot to increase, 
the same lower bound on $\n$ will hold for all future drawings until the jackpot is won.
Thus if the current jackpot is $\j\geq \j_0$, the chance of rolling over $k$ times is at most
\[
(1-1/t)^{k\,\j_0}.
\]
Now, $\j_0=Ft$ and since $t$ is quite large, we may approximate $(1-1/t)^t$ by its limiting 
value as $t$ tends to infinity, which is $1/e$.  Thus the probability above is roughly $e^{-kF}$.
Since $F\geq 0.82$, we conclude that once the jackpot reaches $\j_0$,
the chance of it rolling over $k$ times is no more than $e^{-.82k}$.

Let us now compute the number $k$ of rollovers it will take for the jackpot to reach $2\j_0$.
Each time the jackpot ($\j$) rolls over (due to not being won), the jackpot for the next drawing 
increases, say to $\j'$.  The amount of increase depends on the ticket sales, as this is the only source 
of revenue and a certain fraction of
revenue is mandated to go toward the jackpot.  So in order to predict how much the jackpot
will increase, we need a model of ticket sales as a function of the jackpot.  It is not clear how
best to model this, since the data are so sparse in this range.  For smaller jackpots, evidence
\cite{deboer} indicates that ticket sales grow as a quadratic function of the jackpot, so one possibility
is to extrapolate to this range.  On the other hand, no model can work for arbitrarily large jackpots,
because of course at some point the sales market will be saturated.  For the purposes of this
argument, then, we note that the ratio $\j'/\j$ has never been larger than about $1.27$ for
any reasonably large jackpot, and we use this as an upper bound on the rolled-over jackpot.
This implicitly assumes a linear upper bound on ticket sales as a function of the jackpot,
but the bound is nevertheless generous by historical standards.\footnote{It was pointed out
to us by Victor Matheson that over time lottery ticket sales are trending downward \cite{deboer}, so 
our upper bound on all past events is likely to remain valid in the future.}

So, suppose that each time a large jackpot $\j$ is rolled over, the new jackpot $\j'$ is less 
than $1.27\j$.  Then, even the jackpot corresponding to the largest value of $\j/\j_0$ (namely
$\j/\j_0=1.19$) would have had to roll over three more times before $\j$ would have surpassed
$2\j_0$.  Thus we should evaluate $e^{-0.82k}$ with $k=3$.

Plugging in $k=3$ shows that once the jackpot reaches $\j_0$, the probability that it
reaches $2\j_0$ is at most $e^{-0.82\cdot 3}<1/11$.
So if the jackpot of one of these lotteries reaches $\j_0$ about once every two years (a reasonable estimate by historical standards), then one would expect it to reach $2\j_0$ about once every 22 years, which is longer than the life 
span of most lotteries.  This is the basis of our claim that Mega Millions and Powerball
are unlikely to ever offer a positive eRoR.

As a final remark, we point out that even in the ``once every 22 years" case where the jackpot 
exceeds $2\j_0$, one only concludes that the hypotheses of Example \ref{MMPB.eg} are not satisfied; one still needs to check whether the expected rate of return is positive. 
And in any case, there still remains the question of whether buying lottery tickets is a good investment.
\end{borel}

%%%%%%%%%%%%%%%%%%%%%%%%%%%%%%%%%%%%%%%%%%%%
\part{To invest or not to invest:  Analyzing the risk}
\label{risk}

In the last few sections we have seen that good bets in the lottery
do exist, though they may be rare.  We also have an idea of how
to find them.  So if you are on the lookout and you spot a good
bet, is it time to buy tickets?  In the rest of the paper we address
the question of whether such an opportunity would actually make
a good investment.

%%%%%%%%%%%%%%%%%%%%%%%%%%%%%%%%%%%%%%%%%%%%
\section{Is positive rate of return enough?} \label{port}
\setcounter{table}{0}

The Lotto Texas drawing of April 7, 2007 had a huge rate of return of 30\% over a very 
short time period (which for sake of argument we call a week).  Should you buy tickets for such 
a drawing?  The naive investor would see the high rate of return and immediately say 
``yes."\footnote{The naive view is that rate of return is all that matters.  This is not just a straw man; 
the writers of mutual fund prospectuses often capitalize on this naive view by reporting historical rates of return, but not, say, correlation with the S\&P 500.}
But there are many 
risky investments with high expected rates of return that investors typically choose not to invest 
in directly, like oil exploration or building a time machine.  How should one make these decisions?

This is an extreme example of a familiar problem:  you want to invest in a combination of various assets with different rates of return and different variances in those rates of return.  Standard examples of such assets are a certificate of deposit with a low (positive) rate of return and zero variance, bonds with a medium rate of return and small variance, or stocks with a high rate of return and large variance.  How can you decide how much to invest in each?  Undergraduate economics courses teach a method, known as \emph{mean-variance portfolio analysis}, by which the investor can choose the optimal proportion to invest in each of these assets.  (Harry Markowitz and William Sharpe shared the Nobel Prize in economics in 1990 for their seminal work in this area.)  We will apply this same method to compare lottery tickets with more traditional risky assets.  This will provide both concrete investment advice and a good illustration of how to apply the method, which requires little more than basic linear algebra.

The method is based on certain assumptions about your investment preferences.  You want to pick a  \emph{portfolio}, i.e., a weighted combination of the risky assets where the weights sum to 1.  We assume:
\begin{enumerate}
\item When you decide whether or not to invest in an asset, the only attributes you consider are its expected rate of return, the variance in that rate, and the covariance of that rate with the rates of return of other assets.\footnote{This assumption is appealing from the point of view of mathematical modeling, but there are various objections to it, hence also to mean-variance analysis.  But we ignore these concerns  because mean-variance analysis is ``the de-facto standard in the finance profession" \cite[\S2.1]{Brandt}.}
\item You prefer portfolios with high rate of return and low variance over portfolios with low rate of return and high variance.  (This is obvious.)
\item Given a choice between two portfolios with the same variance, you prefer the one with the higher rate of return.
\item \label{subtle} Given a choice between two portfolios with the same rate of return, you prefer the one with the lower variance.
\end{enumerate}
From these assumptions, we can give a concrete decision procedure for picking an optimal portfolio.

But before we do that, we first justify (4).
In giving talks about this paper, we have found that \eqref{subtle} comes as a surprise to some mathematicians; they do not believe that it holds for every rational investor.  Indeed, if an investor acts to maximize her expected amount of money, then two portfolios with the same rate of return would be equally desirable.  But economists assume that rational investors act to maximize their expected \emph{utility}.\footnote{Gabriel Cramer of Cramer's rule fame once underlined this distinction by writing: ``mathematicians evaluate money in proportion to its quantity while ... people with common sense evaluate money in proportion to the utility they can obtain from it." \cite[p.~33]{Bernoulli}.}

In mathematical terms, an investor  derives utility (pleasure) $U(x)$ from having $x$ dollars, and the investor seeks to maximize the expected value of $U(x)$.  Economists assume that $U'(x)$ is positive, i.e., the investor always wants more money.  They call this axiom \emph{non-satiety}.  They also assume that $U''(x)$ is negative, meaning that someone who is penniless values a hundred dollars more than a billionaire does.  These assumptions imply \eqref{subtle}, as can be seen by Jensen's inequality:  given two portfolios with the same rate of return, the one with lower variance will have higher expected utility.   As an alternative to using Jensen's inequality, one can examine the Taylor polynomial of degree 2 for $U(x)$.  The great economist Alfred Marshall took this second approach in Note IX of the Mathematical Appendix to \cite{Marshall}.\footnote{That note ends with: ``... experience shows that [the pleasures of gambling] are likely to engender a restless, feverish character, unsuited for steady work as well as for the higher and more solid pleasures of life."}  Economists summarize property \eqref{subtle} by saying that \emph{rational investors are risk-averse}.  

%To quote \cite[p.~27]{Sharpe}: ``A large body of evidence indicates that almost everyone is a risk averter when making important decisions.  Clear counterexamples are rarely found.  A day at the horse races provides something [in addition to] risk and probable loss, and even the ardent fan seldom takes his entire earnings to the track."

%%%%%%%%%%%%%%%%%%%%%%%%%%%%%%%%%%%%%%%%%%%%%%%%%
\section{Example of portfolio analysis}

We consider a portfolio consisting of typical risky assets: the iShares Barclays aggregate bond fund (AGG), the MSCI EAFE index (which indexes stocks from outside of North America), the FTSE/NAREIT all REIT index (indexing U.S.\  Real Estate Investment Trusts), the Standard \& Poor 500 (S\&P500), and the NASDAQ Composite index.  We collected weekly adjusted returns on these investments for the period January 31, 1972 through June 4, 2007---except for AGG, for which the data began on September 29, 2003.  This time period does not include  the current recession that began in December 2007.  The average weekly rates of return and the covariances of these rates of return are given in Table \ref{cov}.
\begin{table}[htb]
\begin{tabular}{|cc|c|rrrrr|} \hline
\#&Asset&eRoR&AGG&EAFE&REIT&S\&P500&NASDAQ \\ \hline
1&AGG&.057&.198&.128&.037&$-.077$&$-.141$ \\
2&EAFE&.242&&5.623&2.028&.488&.653 \\
3&REIT&.266&&&4.748&.335&.419 \\
4&S\&P500&.109&&&&10.042&10.210 \\
5&NASDAQ&.147&&&&&12.857 \\ \hline
\end{tabular}
\caption{Typical risky investments.  The third column gives the expected weekly rate of return in \%.  Columns 4--8 give the covariances (in $\%^2$) between the weekly rates of return.  The omitted covariances can be filled in by symmetry.} \label{cov}
\end{table}

We choose one of the simplest possible versions of mean-variance analysis.  We assume that you can invest in a risk-free asset---e.g., a short-term government bond or a savings account---with positive rate of return $R_F$.  The famous ``separation theorem" says that you will invest in some combination of the risk-free asset and an efficient portfolio of risky assets, where the risky portfolio depends only on $R_F$ \emph{and not on your particular utility function.}
%\footnote{A further result in this direction imply that this optimal portfolio of risky assets is an index fund modeling the entire market.  It is natural to speculate that  this further result led directly to the recent proliferation of index funds offered by investment firms.}  
See \cite{Sharpe} or any book on portfolio theory for
more on this theorem.

We suppose that you will invest an amount $i$ in the risky portfolio, with units chosen so that $i = 1$ means you will invest the price of 1 lottery ticket.  We describe the risky portfolio with a vector $X$ such that your portfolio contains $i \, X_k$ units of asset $k$.  If $X_k$ is negative, this means that you ``short" $i\, |X_k|$ units of asset $k$.  We require further that $\sum_k |X_k| = 1$, i.e., in order to sell an asset short, you must put up an equal amount of cash as collateral.  (An economist would say that we allow only ``Lintnerian" short sales, cf.~\cite{Lintner}.)

Determining $X$ is now an undergraduate exercise.  We follow \S{II} of \cite{Lintner}.  Write $\mu$ for the vector of expected returns, so that $\mu_k$ is the eRoR on asset $k$, and write $C$ for the symmetric matrix of covariances in rates of return; initially we only consider the 5 typical risky assets from Table \ref{cov}. Then by \cite[p.~21, (14)]{Lintner},
\begin{equation} \label{Xdef}
X = \frac{Z}{\sum_k |Z_k | } \quad \text{where} \quad 
Z := C^{-1} \left( \mu - R_F \left( \begin{smallmatrix} 1 \\ 1 \\ \vdots\\ 1 \end{smallmatrix} \right) \right).
\end{equation}
For a portfolio consisting of the 5 typical securities in Table \ref{cov}, we find, with numbers rounded to three decimal places:
\begin{equation}\label{Zvec}
Z = \left( \begin{smallmatrix}
0.277- 5.118 R_F \\
0.023+ 0.013 R_F \\
0.037-  0.165 R_F \\ 
-0.009 - 0.014 R_F \\
0.019- 0.118 R_F
\end{smallmatrix} \right).
\end{equation}

%%%%%%%%%%%%%%%%%%%%%%%%%%%%%%%%%%%%%%%%%%%%%%%%%%%
\section{Should you invest in the lottery?} \label{lottery.port}

We repeat the computation from the previous section, but we now include a particular lottery drawing as asset 6.  
We suppose that the drawing has eRoR $R_L$ and variance $v$.  Recall that the eRoR is given by \eqref{actual.E}.  One definition of the variance is $E(X^2) - E(X)^2$; here is a way to quickly estimate it.

\begin{borel}{Estimating the variance} \label{var}
For lottery drawings,  $E(X^2)$ dwarfs $E(X)^2$ and by far the largest contribution to the former term comes from the jackpot.  Recall that jackpot winnings might be shared, so by taking into account the odds for the various possible payouts, we get the following estimate for the variance $v_1$ on the rate of return for a single ticket, where $w$ denotes the total number of tickets winning the jackpot, including yours:
\[
v_1 \approx \sum_{w \ge 1} 100^2 \left( \frac{\j}{w} - 1\right)^2 \binom{N-1}{w-1} p^w (1-p)^{N-w}.
\]
(The factor of $100^2$ converts the units on the variance to $\%^2$, to match up with the notation in the preceding section.)  Although this looks complicated, only the first few terms matter.

There is an extra complication.  The variance of an investment in the lottery depends on how many tickets you buy, and we avoid this worry by supposing that you are buying shares in a syndicate that expects to purchase a fixed number $S$ of lottery tickets.  In this way, the variance $v$ of your investment in the lottery is the same as the variance in the syndicate's investment.  Assuming the number $S$ of tickets purchased is small relative to the total number of possible tickets, the variance $v$ of your investment is approximately $v_1 / S$.
\end{borel}

\newcommand{\cut}{\theta}

\medskip
With estimates of the eRoR $R_L$ and the variance $v$ in hand, we proceed with the portfolio analysis.
We write $\hat{\mu}$ for the vector of expected rates of return and $\hat{C}$ for the matrix of covariances, so that
\[
\hat{\mu} = \begin{pmatrix} \mu \\ R_L \end{pmatrix} \quad \text{and} \quad \hat{C} = \begin{pmatrix} C & 0 \\ 0 & v \end{pmatrix}.
\]
We have:
\[
\hat{Z} = \hat{C}^{-1} \left(\hat{\mu} - R_F  \left( \begin{smallmatrix} 1 \\ 1 \\ \vdots\\ 1 \end{smallmatrix} \right) \right) = 
\begin{pmatrix}
Z \\
(R_L - R_F)/v
\end{pmatrix}.
\]
We may as well assume that $R_L$ is greater than $R_F$; otherwise, buying lottery tickets increases your risk and gives you a worse return than the risk-free asset.  Then the last coordinate of $\hat{Z}$ is positive.  This is typical in that an efficient portfolio contains some amount of ``nearly all" of the possible securities.  In the real world, finance professionals do not invest in assets where mean-variance analysis suggests an investment of less than some fraction $\cut$ of the total.  Let us do this.  With that in mind, should you invest in the lottery?

\begin{negthm}
Under the hypotheses of the preceding paragraph, if
\[
v \ge \frac{R_L - R_F}{0.022 \cut} \, ,
\]
then an efficient portfolio contains a negligible fraction of lottery tickets.
\end{negthm}

Before we prove this Negative Theorem, we apply it to the Lotto Texas drawing from Table \ref{eg.big}, so, e.g., $R_L$ is about 30\% and the variance for a single ticket $v_1$ is about $4 \times 10^{11}$ (see Section \ref{var}).  Suppose that you have \$1000 to invest for a week in some combination of the 5 typical risky assets or in tickets for such a lottery drawing.  We will round our 
investments to whole numbers of dollars, so a single investment of less than $50$ cents will be
considered negligible.  Thus we take $\cut = 1/2000$.  We have:
\[
\frac{R_L - R_F}{.022 \cut} < \frac{30}{1.1 \times 10^{-5}} < 2.73 \times 10^6.
\]

In order to see if the Negative Theorem applies, we want to know whether $v$ is bigger than $2.73$ million.  As in Section \ref{var}, we suppose that you are buying shares in a syndicate that will purchase $S$ tickets, so the variance $v$ is approximately $(4 \times 10^{11})/S$.  That is, it appears that the syndicate would have to buy around
\[
S \approx \frac{4 \times 10^{11}}{2.73 \times 10^6} \approx 145,\!000 \text{\ tickets}
\]
to make the (rather coarse) bound in the Negative Theorem fail to hold.  

In other words, according to the Negative Theorem, if the syndicate buys fewer than $145,\!000$ tickets, our example portfolio would contain zero lottery tickets.  If the syndicate buys more than that number of tickets, then it might be worth your while to get involved.

\begin{proof}[Proof of the Negative Theorem]
We want to prove that $| \hat{X}_6 | < \cut$.  We have:
\[
| \hat{X}_6 | = \frac{ |\hat{Z}_6|}{\sum_k |\hat{Z}_k|}
 < \frac{|\hat{Z}_6|}{|\hat{Z}_2|}.
\]
(The inequality is strict because $\hat{Z}_6$ is not zero.)  Since $R_F$ is positive, \eqref{Zvec} gives $\hat{Z}_2 > 0.022$.  Plugging in the formula $| \hat{Z}_6 | = (R_L - R_F)/v$, we find:
\[
| \hat{X}_6 | < \frac{R_L - R_F}{0.022v}.
\]
By hypothesis, the fraction on the right is at most $\cut$.
\end{proof}

%%%%%%%%%%%%%%%%%%%%%%%%%%%%%%%%%%%%%%%%%%%%
\section{Conclusions}

So, how can you make money at the lottery?

\subsection*{Positive return} If you just want a positive expected rate of return, then our results in Section \ref{MMPB.sec} say to avoid Mega Millions and Powerball.  Instead, you should buy tickets in lotteries where the ticket sales are a small fraction of the jackpot.  And furthermore, you should only do so when the jackpot is unusually large---certainly the jackpot has to be bigger than the jackpot cutoff $\j_0$ defined in \eqref{j0.def}.  

To underline these results, we point out that the Small Ticket Sales example \ref{small.eg} gives concrete criteria that guarantee that a lottery drawing has a positive rate of return.  We remark that even though these drawings have high variances,  a positive rate of return is much more desirable than, say,
casino games like roulette, keno, and slot machines, which despite their consistent negative 
rates of return are nonetheless extremely popular.

\subsection*{Positive return and buyouts} For drawings with a positive rate of return, one can obtain a still higher rate of return by ``buying out" the lottery, i.e., buying one of each of the possible distinct tickets.  We omit a detailed analysis here because of various complicating factors such as the tax advantages, increase in jackpot size due to extra ticket sales, and a more complicated computation of expected rate of return due to the lower-tier pari-mutuel prizes.  The simple computations we have done do not scale up to include the case of a buyout.  See \cite{GM:fair} for an exhaustive empirical study.

In any case, buying out the lottery requires a large investment---in the case of Lotto Texas, about \$26 million.  This has been attempted on several occasions, notably with Australia's New South Wales lottery in 1986, the Virginia (USA) Lottery in 1992 \cite{Mandel95}, and the Irish National Lottery in 1992 \cite{pole}.\footnote{Interestingly, in both the Virginia and Irish drawings, buyout organizers won the jackpot despite only buying about 70\% \cite{Mandel92} and 92\% \cite{pole:apt} of the possible tickets, respectively.}  Buying out the lottery may be tax advantaged because the full cost of buying the tickets may be deductible from the winnings; please consult a tax professional for advice.  On the other hand, buying out the lottery incurs substantial operational overhead in organizing the purchase of so many tickets, which typically must be purchased by physically going to a lottery retailer and filling out a play slip.

We also point out that in some of the known buyout attempts
the lottery companies resisted paying out the prizes, claiming that the practice of buying out
the game is counter to the spirit of the game (see \cite{Mandel95}, \cite{pole}).  In both the cited cases there was a settlement, but one should be aware
that a buyout strategy may come up against substantial legal costs.

\subsection*{Investing} If you are seeking consistently good investment opportunities, then our results in Section \ref{lottery.port} suggest that this doesn't happen in the lottery, due to the astounding variances in rates of return on the tickets.  

What if a syndicate intends to buy out a lottery drawing, and there is a positive expected rate of return?  Our mean-variance analysis suggests that you should invest a small amount of money in such a syndicate.

\subsection*{Alternative strategies}
Buying tickets is not the only way to try to make money off the lottery.  The corporations that operate Mega Millions and Powerball are both profitable, and that's even after giving a large fraction of their gross income (35\% in the case of Mega Millions \cite{MM}) as skim to the participating states. Historically speaking, lotteries used to offer a better rate of return to players \cite{sprowls}, possibly because the lottery only had to provide profits to the operators.  With this in mind, today's entrepreneur could simply run his or her own lottery with the same profit margins but returning to the players (in the form of better prizes) the money that the state would normally take.  This would net a profit for the operator and give the lottery players a better game to play. Unfortunately, we guess that running one's own lottery is illegal in most cases.  But a similar option may be open to casinos: a small modification of keno to allow rolling jackpots could combine the convenience and familiarity of keno with the excitement and advertising power of large jackpots. 

\bigskip

\noindent{\small\textbf{Acknowledgments.} We thank Amit Goyal and Benji Shemmer for sharing
their helpful insights; Bill Ziemba and Victor Matheson for valuable feedback on an earlier draft;
and Rob O'Reilly for help with finding the data in Table \ref{cov}.  We owe Ron Gould a beer for instigating this project.  The second author was partially supported by NSF grant DMS-0653502.}

\providecommand{\bysame}{\leavevmode\hbox to3em{\hrulefill}\thinspace}
\providecommand{\MR}{\relax\ifhmode\unskip\space\fi MR }
% \MRhref is called by the amsart/book/proc definition of \MR.
\providecommand{\MRhref}[2]{%
  \href{http://www.ams.org/mathscinet-getitem?mr=#1}{#2}
}
\providecommand{\href}[2]{#2}

\end{document}